\numberwithin{equation}{section}
\title{Uniform regular weighted graphs with large degree: Wigner's law, asymptotic freeness and graphons limit}
\author{Camille Male, Sandrine P\'ech\'e}
\date{}
\newtheorem{Th}{Theorem}[section]
\newtheorem{Def}[Th]{Definition}
\newtheorem{Prop}[Th]{Proposition}
\newtheorem{Lem}[Th]{Lemma}
\newtheorem{Ass}{Assumption}
\newtheorem{Rem}{Remark}
\renewcommand\leq\leqslant
\renewcommand\geq\geqslant
\def\Tr{\mathrm{Tr}}
\def\esp{\mathbb E}
\def\etc{,\ldots ,}
\def\limN{\underset{N \rightarrow \infty}\longrightarrow}
\def\Nlim{\underset{N \rightarrow \infty}\lim}
\def\eps{\varepsilon}
\def\eq{\begin{eqnarray*}}
\def\qe{\end{eqnarray*}}
\def\eqa{\begin{eqnarray}}
\def\qea{\end{eqnarray}}
\def\mbf{\mathbf}
\def\mcal{\mathcal}
\def\mbb{\mathbb}
\def\trm{\textrm}
\def\mrm{\mathrm}
\newcommand{\one}{\mathbbm{1}}
\newcommand{\brem}{\begin{Rem}}
\newcommand{\erem}{\end{Rem}}
\begin{document}
\maketitle

\begin{center}
\begin{minipage}{14cm}
\begin{center}{\sc abstract:}\end{center}
{
For each $N\geq 1$, let $G_N$ be a simple random graph on the set of vertices $[N]=\{1,2, \ldots, N\}$, which is invariant by relabeling of the vertices. The asymptotic behavior as $N$ goes to infinity of correlation functions:
	$$ \mathfrak C_N(T)= \esp\bigg[ \prod_{(i,j) \in T} \Big( \one_{\big( \{i,j\} \in G_N \big)} - \mbb P(   \{i,j\} \in G_N) \Big)\bigg], \ T \subset [N]^2 \trm{ finite}$$
furnishes informations on the asymptotic spectral properties of the adjacency matrix $A_N$ of $G_N$. 
Denote by $d_N = N\times \mbb P(  \{i,j\} \in G_N) $ and assume $d_N, N-d_N\limN \infty$. If $\mathfrak C_N(T) =\big( \frac{d_N}N\big)^{|T|} \times O\big( d_N^{-\frac {|T|}2}\big)$ for any $T$, the standardized empirical eigenvalue distribution of $A_N$ converges in expectation to the semicircular law and the matrix satisfies asymptotic freeness properties in the sense of free probability theory. We provide such estimates for uniform $d_N$-regular graphs $G_{N,d_N}$, under the additional assumption that $|\frac N 2 - d_N- \eta \sqrt{d_N}| \limN \infty$ for some $\eta>0$. 
Our method applies also for simple graphs whose edges are labelled by i.i.d. random variables.
}

\end{minipage}
\end{center}

\section{Introduction and statement of results}
The article is concerned with  some statistical spectral properties of certain symmetric random matrices constructed from random graphs on $N$ vertices. We are interested in the global asymptotic behavior when $N$ is large of the (mean) empirical spectral distribution (e.s.d.) of such matrices $M_N$ as well as in questions coming from free probability. Implicitly, whenever we consider a random matrix $M_N$ or a random graph $G_N$, we actually mean a sequence $(M_N)_{N\geq 1}$ (resp. $(G_N)_{N\geq 1}$), where for each $N\geq 1$, $M_N$ is a random matrix of size $N \times N$ (resp. $G_N$ is a random graph) on the set of vertices $[N]:=\{1\etc N\}$.

We can now define the model under study.
Let $G_N$ be an undirected random graph on $[N]$. To each edge $\{i,j\} \in G_N$ is assigned the real weight $\xi_{i,j} = \xi_{j,i}$. We make the following hypotheses.
 \begin{Ass}\label{Ass:1}
\begin{enumerate}
	\item $G_N$ is simple, i.e. has no loops and each edge is of multiplicity one. 
	\item It is invariant in law by relabeling of its vertices, that is any weighted graph obtained by permuting the vertices of $G_N$ has the same distribution.
	\item the $\xi_{i,j}$, $1\leq i\leq j\leq N$, are independent identically distributed (i.i.d.) random variables, not identically zero and which admit moments of any order (i.e. $0<\esp\big[|\xi_{i,j}|^{2K}\big]<\infty$ for any $K\geq 1$). 
\end{enumerate}
\end{Ass}
Most of our efforts in this article are dedicated to the uniform regular graph, that is when the random graph is chosen uniformly among the set of graphs in $[N]$ with given degree $d_N$. We here assume that $d_N$ goes to infinity with $N$. More generally our study extends to any random weighted graphs $G_N$ satisfying the three above hypothesis.
\\The weighted adjacency matrix $A_N = (A_N{(i,j)})_{i,j=1\etc N}$ of $G_N$ is the symmetric matrix of size $N$ defined by: for any $i,j=1\etc N$
			$$A_N(i,j) = \left\{ \begin{array}{cc} 
						\xi_{i,j} & \textrm{if } \{i , j\} \textrm{ is an edge of } G_N, \\
						0 & \textrm{otherwise.}
					\end{array} \right.$$
Denote by $\lambda_1, \ldots, \lambda_N$ the eigenvalues of a symmetric matrix $A_N$. Then its spectral measure $\mu_{A_N}$ is the probability measure 
	$$\mu_{A_N} =   \frac 1 N \sum_{i=1}^N \delta_{\lambda_i}.$$ 					
Note that $A_N$ is invariant by conjugation by permutation matrices. The ''standardized`` version of the weighted adjacency matrix of $G_N$ is the random matrix
	\eqa\label{Eq:DefMN}
		M_N = \frac {  A_N - m_1  \frac{d_N}{N-1} J_N }{ \sqrt{d_N(m_2-m_1^2  \frac{d_N}{N-1})}},
	\qea
where $J_N$ is the matrix whose extra diagonal entries are ones and diagonal entries are zero, $m_1=\esp[\xi_{i,j}]$, $m_2= \esp[\xi_{i,j}^2]$, and $d_N$, the mean degree of the graph, is defined by
	\eqa\label{Eq:DefdN}
		d_N := \esp\Big[ \sum_{j\in[N]\setminus\{i\}} \one_{ \{i,j\} \trm{ is an edge of } G_N}\Big], \ \forall i \in [N].
	\qea
In the definition \eqref{Eq:DefdN}, there is no dependence on $i$ since the graph is invariant by relabeling of its vertices. 
By standardized, we mean that $M_N$ has centered entries and that for the polynomial $P(x)=x^2$, one has $\esp\mu_{M_N}(P) = \esp\big[ \frac 1 N \Tr M_N^2\big]=1$. 
\\
\par Even in the case where the entries of $M_N$ are independent, a variety of so-called ''sparse'' weighted random graphs (when $d_N$ is bounded) exhibit spectral properties which differ from classical models of random matrix theory.\\
The Erd\"os-R\'enyi (E.R.) random graph $G(N,\alpha_N)$ is the undirected weighted random graph for which any pair of vertices are connected with probability $\alpha_N$, independently of the other pairs of vertices. It fits the assumptions above with $d_N= \alpha_N \times (N-1)$.
When $\alpha_N=\frac d {N}$ with $d> 0$ fixed, the e.s.d. of the standardized adjacency matrix converges weakly and in moments to a symmetric measure. Since the measures $\mu_{M_N}$ are random , it should be mentioned that this convergence is almost sure, in probability and in moments. Apart formulas for its moments, few is known about the limiting e.s.d. It always has unbounded support \cite{Khor}. When the $\xi_{i,j}$ are constant equal to one, it is known that it has a dense set of atoms if $d\leq 1$ and it is conjectured that it has a density otherwise. The value of the mass at zero has been computed recently \cite{just}. \\
If one considers now an E.R. random graph with parameter $\alpha_N$ converging to some constant $\alpha \in ]0,1[$, then the standardized adjacency matrix is a special case of Wigner random matrices. Recall that a Wigner matrix is a random symmetric matrix $X_N=(X(i,j))_{i,j=1\etc N}$ with independent, centered entries, such that the diagonal (resp. extra-diagonal) entries of $\sqrt N X_N$ are distributed according to a measure $\nu_0$ (resp. $\nu$) that does not depend on $N$. Assume that $\int x^{2+\eps}\trm d\nu$ and $\int x^{2+\eps}\trm d\nu_0$ are finite. Then the e.s.d. of a Wigner matrix converges $^*$-weakly to the semicircular law $\mu^{(a)}_{\trm{S.C}}$  with radius $2a$, that is
	$$\mu^{(a)}_{\trm{S.C.}}(\trm d t) = \frac 1 {2\pi a^2} \sqrt{ 4a^2 - t^2} \one_{|t| \leq 2a}, \quad a^2=\int t^2 \nu(dt).$$
The convergence for the standardized E.R. random weighted graph to the semicircular law actually holds as soon as $\alpha_N = \frac{d_N}{N}$ where both $d_N$ and $N-d_N$ tend to infinity with $N$. 
The limitation that $N-d_N$ goes to infinity is necessary. When $\alpha_N=\frac {N-1-d} {N-1}$, $d>0$ fixed, the e.s.d. of $G(N,\alpha_N)$ can be related to the sparse case where $\alpha_N=\frac{d}{N-1}$. To see that point, consider the complementary graph of $G(N,\frac {N-1-d} {N-1})$, for which two vertices are connected by an edge iff they are not connected in $G(N,\frac {N-1-d} {N-1})$. Then the complementary graph is distributed as $G(N,\frac{d}{N-1})$. When the weights $\xi_{i,j}$ are constant, the standardized adjacency matrix of $G(N,\frac {N-1-d}{N-1})$ is distributed as the opposite of the standardized adjacency matrix of $G(N, \frac{d}{N-1})$.
\\
\par The random regular graphs $G_{N,d_N}$ have also been extensively studied in the literature (\cite{McKay}, \cite{DumiPal},\cite{Dumitriu2} \cite{BAD}). Let $N\geq 2$ and $1\leq d_N\leq N-1$ be given integers such that the product $d_N \times N$ is even. Recall that $G_{N,d_N}$ is a random graph chosen uniformly from the set of $d_N$-regular graphs. Let $A_{N}$ be the adjacency matrix of the uniform random regular graph with degree $d_N$ (unweighted, i.e. $\xi_{i,j}=1$ for any $i,j$). When $d$ is fixed independently of $N$, the e.s.d. of $A_N$ converges weakly and in moments to the Mc Kay \cite{McKay} law, i.e. the distribution with density
		$$\mu_{\trm{M.K.}(d)} (\trm{d}x) = \frac{ d\sqrt{ 4(d-1) -x^2}}{2\pi (d^2-x^2)} \one_{|x|\leq 2 \sqrt{ d-1}}\trm d x.$$
If the graphs are weighted by random variables, it is still true that the e.s.d. converges but the distribution depends on the common distribution of the $\xi_{i,j}$.
\\ Tran, Vu and Wang \cite{TranVuWang} consider the case where both $d_N$ and $N-d_N$ go to infinity with $N$. They  show the weak convergence of the e.s.d. of the standardized matrix $M_N$ of the unweighted $d_N$ regular graph to the semicircular distribution $\mu_{SC}^{(1)} $. The convergence is almost sure and in probability. Moreover, a local version is proved: let $I\subset (-2,2)$ be an interval of length $d_N^{-1/10}\ln^{1/5} d_N$. Call $N_I$ the number of eigenvalues falling in $I$. Then $\mathbb{P} (|N_I-n \int_I \mu_{SC}^{(1)}(dx)|\geq \delta n \int_I \mu_{SC}^{(1)}(dx))$ decreases exponentially fast. The basic idea in \cite{TranVuWang} is that an E.R. graph with parameter $\alpha_N$ is "quite often" $d_N=(N-1)\times \alpha_N$-regular. More precisely the authors compare the probability that it is indeed $d_N$-regular with the speed of concentration of linear statistics for the E.R. graph. This allows them to extend the asymptotic behavior of linear statistics of eigenvalues valid for E.R. graphs (based on random matrix theory results) to random regular graphs.
Moreover Dumitriu and Pal \cite{DumiPal} prove, under the hypothesis that $d_N$ is smaller than any power of $N$ ($d_N=N^{o(1)}$), that the convergence in moments of $\mcal \mu_{M_N}$ holds true. The local law is also improved therein. Their method is based on a local approximation of the $d_N$ regular graph by the infinite $d_N$-regular tree.
\\
\par A first application of our main results brings a slight contribution for that picture with new methods.

\begin{Th}[Wigner's law]\label{Regesd} Let $M_{N,d_N}$ the matrix of a uniform $d_N$-regular weighted graph. Assume that $d_N$, $ N-d_N$ and $|\frac N 2-d_N -\eta\sqrt{d_N}|$ tend to infinity for some constant $\eta>0$. Then, the mean empirical spectral distribution of $M_N$ converges in moments to the semicircular distribution, namely
	\eq
		 \forall P \ \mathrm{polynomial}, \ \ \esp\left[ \frac 1 N \sum_{i=1}^N P(\lambda_i) \right] \limN \int_{-2}^2 P(t) \frac1 {2\pi} \sqrt{ 4-t^2} \mrm{d}t,
	\qe
where $\lambda_1 \etc \lambda_N$ denote the eigenvalues of $M_{N,d_N}$.
\end{Th}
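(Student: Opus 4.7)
The plan is to use the moment method: for each $k\geq 0$, show that $\esp\big[\tfrac{1}{N}\Tr M_N^k\big]$ converges to the $k$-th moment of the semicircular distribution $\mu^{(1)}_{\trm{S.C.}}$, i.e.\ to $0$ for $k$ odd and to the Catalan number $C_{k/2}$ for $k$ even. Expanding the trace gives
$$\esp\Big[\tfrac{1}{N}\Tr M_N^k\Big] = \frac{1}{N\sigma_N^k}\sum_{i_0,\ldots,i_{k-1}\in[N]}\esp\Big[\prod_{s=1}^k \bar A_N(i_{s-1},i_s)\Big]$$
with $i_k := i_0$, $\sigma_N^2 = d_N\big(m_2 - m_1^2 \tfrac{d_N}{N-1}\big)$ and $\bar A_N = A_N - m_1\tfrac{d_N}{N-1}J_N$. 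Since the i.i.d.\ weights $\{\xi_{ij}\}$ are independent of $G_N$, grouping each closed walk by its multiset of edges factors each expectation as a product of univariate $\xi$-moments along the distinct edges, times an expectation over $G_N$ of a product of centered indicators $\one_{\{e\in G_N\}} - \tfrac{d_N}{N-1}$. Elementary algebraic expansion rewrites the latter as a signed combination of the correlation functions $\mathfrak C_N(T)$ on submultisets $T$ of the edge multiset of the walk.

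Next I would classify walks by their combinatorial shape: the number $v$ of distinct vertices, the number $e$ of distinct edges, and the multiplicities $(n_1,\ldots,n_e)$ with $\sum n_i = k$. The number of labeled walks of a given shape is $\Theta(N^v)$. Inserting the correlation estimate $\mathfrak C_N(T) = (d_N/N)^{|T|}\cdot O(d_N^{-|T|/2})$ announced in the abstract, together with the univariate bound $\esp\big[\big(\one_{\{e\in G_N\}} - \tfrac{d_N}{N-1}\big)^n\big] = O(d_N/N)$ for edges traversed $n\geq 2$ times, a power-counting of the type used in Wigner's original argument shows that the only shapes contributing in the limit are those with $v = e+1$ and every $n_i = 2$: closed walks traversing the edges of a tree exactly twice each. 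Such shapes exist only when $k$ is even, and their enumeration yields exactly $C_{k/2}$. All other shapes produce contributions that vanish as $d_N, N-d_N \to \infty$; the boundary condition $|\tfrac N 2 - d_N - \eta\sqrt{d_N}| \to \infty$ is what is needed to keep the error terms subleading throughout this regime.

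The main obstacle is establishing the correlation-function estimate itself for the uniform $d_N$-regular graph. For the Erd\"os--R\'enyi graph the correlations vanish identically (exact edge independence), so the bound must quantify how the regularity constraint in $G_{N,d_N}$ asymptotically decouples edges. I would attack this via a switching argument in the spirit of McKay: express $\mbb P(T \subset G_{N,d_N})$ as a ratio of cardinalities of regular-graph sets, and estimate that ratio by iterated degree-preserving edge swaps, producing the leading factor $(d_N/N)^{|T|}$ at first order. The improved $O(d_N^{-|T|/2})$ correction over the naive independent estimate should emerge from a second-order switching analysis combined with inclusion-exclusion, exhibiting Gaussian-type cancellations of the centered product over $T$. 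The technical condition $|\tfrac N 2 - d_N - \eta\sqrt{d_N}| \to \infty$ plays its role precisely here: the complementary-graph duality reduces the regime $d_N > N/2$ to $d_N < N/2$, but the switching Jacobian degenerates near $d_N = N/2$, and the $\sqrt{d_N}$ gap supplies the quantitative margin needed for the estimates to remain uniform in $N$.
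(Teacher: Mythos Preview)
Your plan is correct and follows essentially the same two-step architecture as the paper: (i) reduce convergence of moments to the correlation-function estimate $\mathfrak C_N(T)=(d_N/N)^{|T|}O(d_N^{-|T|/2})$ via a trace expansion and a double-tree count, and (ii) prove that estimate for $G_{N,d_N}$ by switching. The paper organizes step (i) through the \emph{injective trace} formalism (summing first over partitions of the index set, then over injections $V\to[N]$), which cleanly isolates the quantity $K_N=N^{|V|-1-|\bar E|}d_N^{|\bar E|-|E|/2-s/2}$ and makes the power-counting you sketch into a one-line application of the Euler relation $|V|-|\bar E|-1\le 0$; and it carries out step (ii) via an explicit hexagon construction whose switching produces, for each edge of $T$, a cancellation in the alternating sum over $J\subset[n]$ worth exactly $d_N^{-1/2}$ --- the condition $|\tfrac N2-d_N-\eta\sqrt{d_N}|\to\infty$ entering, as you anticipated, to control the boundary case where two vertices share nearly all their neighbors.
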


\par We actually first obtain a criterion for the convergence to the semicircular law of a standardized matrix of a weighted graph $G_N$ (see next section). 
Applying this criterion for the regular uniform graph is a large part of the problem. We proceed by combinatorial manipulations using intensively the so-called \emph{switching-method} (see Section \ref{Sec: prooflemma}). The condition $|\frac N 2 -d_N- \eta\sqrt{d_N}|\limN \infty$ is a limitation of our approach. 
\\
\par Our second application is an extension of this convergence in the context of free probability theory. Thanks to results from \cite{MAL12}, \cite{MAL122}, a minor modification of the mentioned criterion implies a phenomenon called ''asymptotic freeness`` (Definition \ref{Def:Free} below). First, recall the notion of joint distribution of several random matrices $\mbf A_N=(A_N^{(j)})_{j\in J}$, defined when the entries of each matrix admit moments of any order. It is usually called the mean $^*$-distribution (where the symbol $^*$ refers to the conjugate transpose) and defined as the linear map 
	$$\Phi_{\mbf A_N}: P \mapsto \esp\Big[ \frac 1 N\Tr P(\mbf A_N) \Big],$$
 where the $P$ are $^*$-polynomials ($P(\mbf A_N)$ is a fixed finite complex linear combinations of words in the matrices $A_N^{(j)}$ and their transpose). Note that when the family consists only in one symmetric matrix, the $^*$-distribution amounts just to restrict the e.s.d. to polynomials.
 
\begin{Def}[Asymptotic freeness]\label{Def:Free} Families $\mbf A_N^{(1)} \etc \mbf A_N^{(n)}$ of random matrices are asymptotically free whenever,
\begin{enumerate}
	\item the joint family $\mbf A_N=(\mbf A_N^{(1)} \etc \mbf A_N^{(n)})$ has a limiting $^*$-distribution, that is $\Phi_{\mbf A_N}$ converges pointwise,
	\item  the limit of $\Phi_{\mbf A_N}$ is characterized in terms of the limiting $^*$-distributions of each $\mbf A_N^{(j)}$'s, thanks to the following formula. For $n\geq 1$, for any integers $i_1\neq i_2\neq i_3 \neq \dots \neq i_n$ and any $^*$-polynomials $P_1 \etc P_n$ such that $\Nlim \esp\big[ \frac 1 N \Tr P_{j}(\mbf A_N^{(i_j)}) \big] = 0$ for any $j=1\etc n$, one has 
	\eqa\label{Eq:Freeness}
		\Nlim \esp\Big[ \frac 1 N \Tr \big[ P_1(\mbf A_N^{(i_1)}) \dots P_n(\mbf A_N^{(i_n)}) \big] \Big]= 0.
	\qea
\end{enumerate}
\end{Def}

\par Freeness, which is defined as the relation \eqref{Eq:Freeness}, is considered as the analogue of classical independence in free probability. Voiculescu \cite{VOIC} and then Dikema \cite{DYK} proved that independent Wigner matrices $X_N^{(j)}, j\in J,$ are asymptotically free. Moreover, consider a family of deterministic matrices $\mbf Y_N$ converging in $^*$-distribution. Assume the matrices of $\mbf Y_N$ are uniformly bounded in operator norm. Then $X_N^{(j)}, j\in J,$ and $\mbf Y_N$ are asymptotically free (see \cite{AGZ}). The same holds for independent matrices $M_N^{(j)}, j\in J,$ of weighted E.R. graphs with $d^{(j)}_N$ and $N-d^{(j)}_N$ going to infinity for any $j\in J$ (by \cite[Corollary 3.9]{MAL12} with no modification of the proof).
\par Our main result is the following.

\begin{Th}[Asymptotic freeness] \label{RegAF} Let $M_N^{(j)}, j \in J$ be the adjacency matrices of independent  uniform $d_N^{(j)}$-regular weighted graphs ($d_N^{(j)}$ depends on $j\in J$). Assume that there exists $\eta >0$ such that for any $j\in J$, $d_N^{(j)},$ $ N-d_N^{(j)}$ and $|\frac N 2 - d_N^{(j)} -\eta \sqrt{d_N^{(j)}}|$ tend to infinity. Moreover, let $\mbf Y_N $ be a family of deterministic matrices, uniformly bounded in operator norm as $N$ goes to infinity and converging in $^*$-distribution. Then the matrices $M_N^{(j)}, j \in J$ and $\mbf Y_N$ are asymptotically free.
\end{Th}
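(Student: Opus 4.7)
The plan is to reduce Theorem~\ref{RegAF} to a single technical estimate, namely the correlation bound
\[
\mathfrak{C}_N^{(j)}(T) = \left(\tfrac{d_N^{(j)}}{N}\right)^{|T|} O\bigl((d_N^{(j)})^{-|T|/2}\bigr)
\]
for the edge indicators of a single uniform regular graph $G_{N,d_N^{(j)}}$. This estimate is already the content of the proof of Theorem~\ref{Regesd} (Section~\ref{Sec: prooflemma}) under the very hypothesis $|\tfrac{N}{2} - d_N^{(j)} - \eta\sqrt{d_N^{(j)}}| \to \infty$ that we are given. From it, asymptotic freeness follows by appealing to the multi-family, permutation-invariant criterion of \cite{MAL12, MAL122} announced in the paragraph preceding the theorem, which transfers the correlation decay into the combinatorial structure of freeness in the presence of a bounded deterministic background.

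The reduction itself has two pieces. First, the graphs $G_{N,d_N^{(j)}}$ are drawn independently across $j\in J$, so their joint centered edge-correlation factors over $j$: for a finite set $T \subset [N]^2 \times J$ written as $T = \bigsqcup_j T_j$, one has $\mathfrak{C}_N(T) = \prod_j \mathfrak{C}_N^{(j)}(T_j)$, and each factor obeys the required bound with its own $d_N^{(j)}$. Second, the whole family $(M_N^{(j)})_{j\in J}$ is jointly invariant under simultaneous conjugation by a common permutation matrix, and each $M_N^{(j)}$ has centered entries by standardization. The criterion therefore applies to the joint family $(M_N^{(j)})_{j\in J}$ and, simultaneously, yields its asymptotic freeness from $\mathbf{Y}_N$, since $\mathbf{Y}_N$ is deterministic, uniformly bounded in operator norm, and has a limiting $*$-distribution. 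As already noted in the paper for independent weighted E.R.\ graphs, one obtains in a single stroke not only that $(M_N^{(j)})_{j\in J}$ is free from $\mathbf{Y}_N$ but also that the $M_N^{(j)}$ themselves are mutually asymptotically free, the analogue of Voiculescu's asymptotic freeness of independent Wigner matrices.

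The main obstacle is inherited from Theorem~\ref{Regesd}: the combinatorial estimate on $\mathfrak{C}_N^{(j)}$ via the switching method, which produces the decay $O\bigl((d_N^{(j)})^{-|T|/2}\bigr)$ only under the technical restriction $|\tfrac{N}{2} - d_N^{(j)} - \eta\sqrt{d_N^{(j)}}| \to \infty$. Everything else in the proof is formal: independence across $j$ turns a multi-family question into a product of single-family estimates, permutation invariance supplies the structural input required by the criterion, and the operator-norm bound on $\mathbf{Y}_N$ lets the deterministic matrices appear as bounded coefficients in the trace expansions without disturbing the correlation decay extracted from the regular graphs.
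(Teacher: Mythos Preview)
Your proposal is correct and follows essentially the same route as the paper: the paper deduces Theorem~\ref{RegAF} by combining the correlation estimate of Theorem~\ref{Prop:eps_N} (Section~\ref{Sec: prooflemma}) with Proposition~\ref{theo: asyfree}, whose proof (Section~\ref{Sec:Method11}) is precisely the invocation of the criterion from \cite{MAL12,MAL122} that you describe, packaged as Lemma~\ref{Lem:DeduceAsyFree}. The only cosmetic difference is that the paper states the criterion as three separate conditions on each $M_N^{(j)}$ individually (convergence of $\tau_N^0$ to the double-tree indicator, asymptotic factorization, and boundedness), rather than framing it through the factorization of a joint correlation function over $j$; your observation that independence makes the joint correlation factor is correct but not actually needed once the criterion is stated per matrix.
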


To complete the comparison, let us mention that the sparse case does not behave exactly similarly to the E.R. case from the point of view of free probability. When $\alpha^{(j)}_N\sim\frac{d^{(j)}}{N}$ for $d^{(j)}>0$ fixed for any $j\in J$, E.R. matrices $M_N^{(j)}, j\in J$ are not asymptotically free \cite{MAL122}. A different notion of asymptotic freeness holds for $M_N^{(j)}, j\in J,$ and $\mbf Y_N$, called the asymptotic traffic-freeness \cite{MAL12}. We do not use so much this notion in this article. Just recall that the deterministic matrices $\mbf Y_N$ are required to satisfy stronger moment hypotheses. 
When $d^{(j)}_N\limN d^{(j)}>0$ for any $j\in J$ and the $\xi_{i,j}$ are constant equal to one, the adjacency matrices of uniform regular graphs $M_N^{(j)}, j\in J$ are still asymptotically free \cite[Section F]{MoWo}. Nevertheless, in general  $M_N^{(j)}, j\in J$ is not asymptotically free from deterministic matrices $\mbf Y_N$ converging in $^*$-distribution. Under the assumption that $\mbf Y_N$ converges in distribution of traffics, $M_N^{(j)}, j\in J$ and $\mbf Y_N$ are weakly asymptotically free in the sense of \cite{MAL12}.
\\

As a  byproduct of our method, which does not concern the spectral properties of the adjacency matrices $M_N$, we obtain an estimate related to the geometry of uniform regular graphs with large degree. We follow the terminology introduced by Lov\'asz \cite{Lovasz}. A random (unweighted) graph $G_N$ is said to converge in distribution of ''graphons`` if and only if for any fixed finite graph $T$ 
		$$\Nlim P(T \subset G_N) \trm{ exists.}$$
Note that for an E.R. graph with parameter $\alpha_N=\frac{d_N}N$, one has $P\big(T \subset G(N,\alpha_N)\big)=\big( \frac{d_N}N\big)^{n} $. This implies the convergence in distribution of ''graphons'' for such random graphs.

Consider a sequence $d_N$ for which there exists a constant $\eta>0$ such that $d_N, N-d_N$ and $|\frac N 2 -d_N- \eta\sqrt{d_N}|$ going to infinity as $N\to \infty$. 
\begin{Prop}[Graphons limit estimate]\label{Cor:Graphons} Let $G_{N,d_N}$ be a uniform regular graph where $d_N$ is a sequence as above. Then one has that 
	 \eqa\label{Eq:ShapeProfil}
		\mbb P(T \subset G_{N,d_N}) =\big( \frac{d_N}N\big)^{n} \Big(1+ o(d_N)^{-\frac12})\big).
	\qea
\end{Prop}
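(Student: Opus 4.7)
}
The plan is to deduce the graphon estimate directly from the correlation-function bound $\mathfrak C_N(T)=(d_N/N)^{|T|}\cdot O(d_N^{-|T|/2})$ that is the main technical input of the paper (and which, by the abstract, is established for the uniform $d_N$-regular ensemble under the assumption $|N/2-d_N-\eta\sqrt{d_N}|\to\infty$). So I would treat that estimate as a black box and reduce $\mathbb P(T\subset G_{N,d_N})$ to a sum of centred correlations.

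First, since $G_{N,d_N}$ is invariant under vertex relabelling and is $d_N$-regular, a straightforward double-counting gives $p_N:=\mathbb P(\{i,j\}\in G_{N,d_N})=d_N/(N-1)$ for any $i\neq j$; in particular $p_N^n=(d_N/N)^{n}\bigl(1+O(1/N)\bigr)$. Second, writing each indicator as $\mathbf 1_{\{i,j\}\in G_N}=(\mathbf 1_{\{i,j\}\in G_N}-p_N)+p_N$ and expanding the product over the $n$ edges of $T$ yields the identity
\[
\mathbb P(T\subset G_{N,d_N})
=\sum_{S\subset T}p_N^{\,n-|S|}\,\mathfrak C_N(S).
\]
The $S=\varnothing$ term contributes $p_N^{\,n}=(d_N/N)^n(1+O(1/N))$, and every $|S|=1$ term vanishes because $\mathbb E[\mathbf 1_{\{i,j\}\in G_N}-p_N]=0$, so only $|S|\geq 2$ remains to be controlled.

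For $|S|\geq 2$, I would feed in the hypothesis $\mathfrak C_N(S)=(d_N/N)^{|S|}\cdot O(d_N^{-|S|/2})$, which gives
\[
p_N^{\,n-|S|}\,\mathfrak C_N(S)
=(d_N/N)^{n}\cdot O\bigl(d_N^{-|S|/2}\bigr)
=(d_N/N)^{n}\cdot O(d_N^{-1}),
\]
since $|S|\geq 2$. Summing over the fixed (bounded in $N$) number of subsets $S\subset T$ and combining with the $|S|\leq 1$ contributions, one obtains
\[
\mathbb P(T\subset G_{N,d_N})=(d_N/N)^{n}\bigl(1+O(1/N)+O(d_N^{-1})\bigr).
\]
Finally, $d_N\leq N-1$ gives $N^{-1}\leq d_N^{-1/2}\cdot d_N^{1/2}/N\leq d_N^{-1/2}/\sqrt{N}=o(d_N^{-1/2})$, and $d_N^{-1}=o(d_N^{-1/2})$ since $d_N\to\infty$; both remainders are therefore absorbed into $o(d_N^{-1/2})$, which yields \eqref{Eq:ShapeProfil}.

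\paragraph{Where the difficulty lies.} The combinatorial reduction above is essentially bookkeeping; the substantive content is the correlation-function estimate for the uniform regular ensemble, which the paper establishes by the switching-method manipulations alluded to in Section~\ref{Sec: prooflemma}. The only non-routine point in the reduction itself is the vanishing of $|S|=1$ contributions, which upgrades the trivial bound $1+o(1)$ to the sharper $1+o(d_N^{-1/2})$ announced in \eqref{Eq:ShapeProfil}; otherwise the proof of the proposition is a direct inclusion-exclusion on top of the main estimate.
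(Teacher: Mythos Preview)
Your argument is correct and rests on the same single input as the paper's proof, namely the correlation bound $\mathfrak C_N(S)=(d_N/N)^{|S|}\,O(d_N^{-|S|/2})$ of Theorem~\ref{Prop:eps_N}. The paper expands in the opposite direction, writing $\mathfrak C_N(T)=\sum_{\tilde T\subset T}(-d_N/N)^{n-|\tilde T|}\,\mathbb P(\tilde T\subset G_N)$ and then solving for $\mathbb P(T\subset G_N)$ by induction on the number of edges; you instead invert this relation once and for all as $\mathbb P(T\subset G_N)=\sum_{S\subset T}p_N^{\,n-|S|}\mathfrak C_N(S)$, which avoids the induction and makes the vanishing of the $|S|=1$ terms explicit. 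Both routes yield the same error $(d_N/N)^n\bigl(1+O(d_N^{-1})\bigr)$, hence $1+o(d_N^{-1/2})$; your direct expansion is a mild streamlining of the paper's argument rather than a different idea.
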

In particular, it implies the convergence in the sense of graphons of $G_{N,d_N}$ to the same limit as the one of the Erd\" os -R\'enyi random graph $G(N,\frac{d_N}N)$ when $\big( \frac{d_N}N\big)$ admits a limit in in $]0,1[\setminus \{\frac 1 2\}$.
\\
\par The roadmap of our proof is given in next section.

\section{Main arguments: the correlation functions}\label{Sec:Statement}
\par Hereafter, we consider a random matrix $M_N$ as in \eqref{Eq:DefMN}, constructed on a random graph $G_N$ satisfying Assumption \ref{Ass:1}: $G_N$ is a simple random graph invariant by relabeling of its vertices, weighted by i.i.d. random variables whose common distribution does not depend on $N$. The sequence $d_N$ is the mean degree of the graph defined in \eqref{Eq:DefdN}.
%
%
%
%
%
%
%
\par Let $T$ be a given simple graph with $n$ edges $e_1 \etc e_n$. Use integers to name the vertices of $T$ and let $N$ be larger than the maximum of these integers. Denote by $G_N(e_j)$ the indicator function of the event ''$e_j$ belong to $G_N$``. Note that $ \esp\big[ G_N(e_j) \big]=\frac{d_N}N=:\alpha_N$ for any $j$ and $ \esp\big[ \prod_{j=1}^n G_N(e_j) \big] =\mbb P(T\subset G_N)$. Define the function
		\eqa\label{Hyp2}
			\mathfrak C_N (T):=\esp\Big[ \prod_{i=1}^n \big( G_N(e_i) - \alpha_N \big) \Big].
		\qea
Heuristically, $\mathfrak C_N (T)$ measure some correlations between the edges of $G_N$.

\begin{Prop}\label{theo: esd}Assume that $d_N$ tends to infinity, and that either $ N-d_N$ tends also to infinity or that the random variables $\xi_{i,j}$ are not constant. Moreover, assume that for any simple graph $T$ with $n$ edges, one has $\mathfrak C_N (T) = \left( \frac{d_N}{N} \right)^n   \times  \eps_N(T)$ where 
	$$\eps_N(T) \to 0 \text{ when }n=2 \trm{ and is } o \big(\, d_N^{1-\frac n 2} \, \big) \trm{ for } n\geq 3.$$
 Then, the mean empirical spectral distribution of $M_N$ converges in moments to the semicircular distribution.
\end{Prop}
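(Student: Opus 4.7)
My strategy is the moment method: for each integer $k\geq 1$, I aim to show that $\frac{1}{N}\esp[\Tr M_N^k]$ tends to the $k$-th moment of the semicircle law (namely $C_{k/2}$ when $k$ is even, and $0$ otherwise). Expanding the trace in coordinates,
\[
\frac{1}{N}\esp[\Tr M_N^k] = \frac{1}{N c_N^k}\sum_{w = (i_1, \dots, i_k)}\esp\Big[\prod_{\ell=1}^k \big(A_N(i_\ell, i_{\ell+1}) - m_1\tfrac{d_N}{N-1}\big)\Big],
\]
where $c_N^2 = d_N(m_2 - m_1^2\tfrac{d_N}{N-1})$ tends to infinity under the stated dichotomy (the non-constancy of $\xi$ is invoked exactly when $N - d_N$ need not diverge). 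Associate to each closed walk $w$ its image $T_w$: a simple graph with $n$ distinct edges of multiplicities $k_1,\dots, k_n$ and $v$ distinct vertices. Since the weights $\xi_{i,j}$ are i.i.d.\ and independent of $G_N$, integrating them out first (conditionally on $G_N$) gives $\esp_\xi[(A_N(e) - m_1\alpha_N)^{k_e}\mid G_N] = f_{k_e}(G_N(e))$, where $f_k$ is affine on $\{0,1\}$. Writing $f_k(G_N(e)) = \mu_k + B_k(G_N(e)-\alpha_N)$ with $\mu_k := \esp[Y_e^k]$ and $B_k := f_k(1)-f_k(0)$ bounded, expansion of the product over edges produces the key identity
\[
\esp\Big[\prod_e (A_N(e) - m_1\alpha_N)^{k_e}\Big] = \sum_{S \subseteq E(T_w)}\Big(\prod_{e\in S} B_{k_e}\Big)\Big(\prod_{e\notin S}\mu_{k_e}\Big)\mathfrak{C}_N(T_w|_S),
\]
expressing the moment entirely in terms of the correlation functions on subgraphs --- exactly what the hypothesis controls.

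Plugging in $\mu_2/c_N^2 = N^{-1}(1+o(1))$ (a direct consequence of the standardization), $\mu_1 = 0$, the relevant asymptotics for $\mu_k$ and $B_k$ with $k\geq 2$, and the hypothesis $\mathfrak{C}_N(T|_S) = \alpha_N^{|S|}\varepsilon_N(T|_S)$ for $|S| \geq 2$ (together with $\mathfrak{C}_N(T|_S) = 0$ when $|S| = 1$), a direct computation shows that each summand contributes, after summing over the $O(N^v)$ vertex labelings of a fixed shape and dividing by $N$, a quantity bounded by $O\big(N^{v-n-1}\, d_N^{n-k/2}\, \varepsilon_N(T_w|_S)\big)$, with the convention $\varepsilon_N \equiv 1$ when $S = \emptyset$. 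The only $O(1)$ contribution comes from \emph{double tree walks}: those with $T_w$ a tree ($v = n+1$), every edge traversed exactly twice (so $n = k/2$, each $k_e = 2$), paired with $S = \emptyset$. The standard planar-tree bijection counts $C_{k/2}\,N^{k/2+1}(1+o(1))$ such walks, and $(\mu_2/c_N^2)^{k/2}$ together with the $1/N$ trace normalization produces the $k$-th semicircle moment $C_{k/2}$; no such walk exists for odd $k$, yielding the required vanishing.

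It remains to show that every other (shape, multiplicity, subset) configuration is $o(1)$; this is the technical core. Three mechanisms cover the cases: non-tree walks pay a factor $N^{v-n-1} \leq N^{-1}$; tree walks with some $k_e \geq 4$ have $n \leq k/2 - 1$ (since a closed walk on a tree uses every edge an even number of times), giving $d_N^{n-k/2} \to 0$; and walks with $n_1 := \#\{e : k_e = 1\} \geq 1$ are automatically non-tree and force $\{e : k_e = 1\} \subseteq S$, hence inherit the decay of $\varepsilon_N$. The delicate subcase is walks where $n - k/2$ is positive; this can occur only when $n_1 \geq 1$, and then $n - k/2 \leq n_1/2$. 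The bound $\varepsilon_N(T|_S) = o(d_N^{1-|S|/2})$ for $|S| \geq 3$, combined with the forced constraint $|S| \geq n_1$, is calibrated exactly so that $d_N^{n-k/2}\varepsilon_N = o(d_N)$, which together with the non-tree factor $N^{-1}$ yields $o(d_N/N) = o(1)$. This balancing of exponents is the main obstacle; the surrounding combinatorics is a standard adaptation of Wigner's trace method, and summation over the finitely many walk shapes of length $k$ concludes.
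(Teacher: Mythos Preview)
Your proposal is correct and follows essentially the same route as the paper: both expand $\esp[\Tr M_N^k]$ over closed walks, group them by their quotient graph (what the paper formalizes as the injective-trace decomposition $\Tr M_N^k=\sum_\pi \Tr^0[T_k^\pi(M_N)]$), reduce the expectation over multiple edges to the correlation functions $\mathfrak C_N$ on simple subgraphs, and then run the exponent count $N^{v-n-1}d_N^{\,n-k/2}$ to isolate double trees. Your affine decomposition $f_{k_e}(G_N(e))=\mu_{k_e}+B_{k_e}(G_N(e)-\alpha_N)$ and the resulting subset-sum over $S\subseteq E(T_w)$ is a clean repackaging of the paper's Lemma~3.5 (which instead writes $G_N(e)=(G_N(e)-\alpha_N)+\alpha_N$ iteratively); the subsequent balancing $n-k/2\le n_1/2$ against $\eps_N(T|_S)=o(d_N^{1-|S|/2})$ with $|S|\ge n_1$ matches the paper's $K_N$ analysis in~(3.4)--(3.5). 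One small point to make explicit: your bound $O(N^{v-n-1}d_N^{\,n-k/2})$ implicitly uses $c_N^2\asymp d_N$, i.e.\ $m_2-m_1^2\alpha_N$ bounded away from zero; the paper secures this by passing to the complementary graph when $\xi$ is constant and $d_N>N/2$, and you should invoke the same reduction.
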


Proposition \ref{theo: esd} is proved in Section \ref{Sec:Method1}.

\begin{Prop} \label{theo: asyfree} Let $M_N^{(j)}, j\in J$ be independent matrices constructed as $M_N$ in Proposition \ref{theo: esd}. Assume again that $d_N$ tends to infinity, and that either $ N-d_N$ tends also to infinity or that the random variables $\xi_{i,j}$ are not constant. Moreover, assume that for any simple graph $T$ with $n$ edges, one has $\mathfrak C_N (T) = \left( \frac{d_N}{N} \right)^n   \times  \eps_N(T)$ where 
	$$\eps_N(T) \to 0 \text{  if }n=2 \trm{ and is } O \big(\, d_N^{\frac 12-\frac n 2} \, \big) \trm{ for } n\geq 3.$$
 Let $\mbf Y_N$ be a family of deterministic matrices, uniformly bounded in operator norm as $N$ goes to infinity and converging in $^*$-distribution. Then the $M_N^{(j)}, j\in J$ and $\mbf Y_N$ are asymptotically free (Definition \ref{Def:Free}).
%
%
%
%
\end{Prop}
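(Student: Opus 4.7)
The plan is to adapt the combinatorial moment method used to prove Proposition \ref{theo: esd} so that it controls mixed traces in the matrices $M_N^{(j)}$ and in the deterministic family $\mathbf{Y}_N$. By Definition \ref{Def:Free}, asymptotic freeness reduces to showing that for any $n\geq 1$, any alternating sequence $i_1\neq i_2\neq\cdots\neq i_n$ in $J$, and any $*$-polynomials $P_1,\ldots,P_n$ with $\Nlim \esp[\tfrac1N\Tr P_\ell(M_N^{(i_\ell)},\mathbf Y_N)]=0$, one has
$$\esp\bigl[\tfrac1N\Tr\bigl(P_1(M_N^{(i_1)},\mathbf Y_N)\cdots P_n(M_N^{(i_n)},\mathbf Y_N)\bigr)\bigr]\limN 0.$$
After linearization, the quantity to estimate is a linear combination of expressions $\esp[\tfrac1N\Tr\prod_\ell(M_N^{(i_\ell)})^{k_\ell}\,Q_\ell(\mathbf Y_N)]$ for monomials $Q_\ell$ in $\mathbf Y_N,\mathbf Y_N^*$, corrected by the centering constants that come from the hypothesis on each $P_\ell$.

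Expanding each factor $(M_N^{(i_\ell)})^{k_\ell}$ with \eqref{Eq:DefMN} and writing the trace as a sum over vertex-tuples in $[N]^L$, the independence of the $M_N^{(j)}$'s factorizes the expectation across colors $j\in J$. Conditioning on the graphs $G_N^{(j)}$ and using that the $\xi$'s are i.i.d.\ forces, after taking the weight-expectations, each residual graph expectation to depend only on the multi-set of edges of that color. M\"obius inversion on the Boolean lattice of sub-edge-sets converts this into a linear combination of the correlation functions $\mathfrak C_N(T_j)$ of \eqref{Hyp2}, one per color. The whole trace becomes a sum indexed by edge-colored labelled multigraphs on $[N]$, each contribution being a product of $\mathfrak C_N$-factors, entries of $\mathbf Y_N$ evaluated along the walk, and fixed powers of $m_1,m_2,d_N$. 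This is exactly the expansion at the heart of the proof of Proposition \ref{theo: esd}, now enriched with the color-alternation data imposed by the indices $i_\ell$ and by the $\mathbf Y_N$-insertions.

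The estimates then mimic the classical Wigner freeness argument of \cite{AGZ}. Collecting terms according to their colored incidence pattern $\Gamma$ with $v$ vertex-classes and $e$ edge-classes, the contribution of $\Gamma$ scales like $N^{v-1}\cdot(d_N/N)^{e}\cdot\prod_j \varepsilon_N(T_j)$, up to bounded weight moments and bounded $\mathbf Y_N$-entries (the latter controlled by $\sup_N\|\mathbf Y_N\|<\infty$ and the convergence in $*$-distribution of $\mathbf Y_N$). The assumption $\varepsilon_N(T)=O(d_N^{(1-n)/2})$ for $n\geq 3$ gives an extra $d_N^{-1/2}$ per color-component with at least three edges, which is precisely the improvement needed to kill simultaneously the non-tree color-components (as in Proposition \ref{theo: esd}) and the color-crossing configurations that would violate freeness. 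The leading terms are thus those where, within each color, $T_j$ is a tree pairing of the corresponding $M$-steps (yielding semicircular moments), while distinct color blocks meet only at single vertices through $\mathbf Y_N$-entries (yielding the free product combinatorics with the limit of $\mathbf Y_N$). The alternation $i_1\neq\cdots\neq i_n$ combined with the centering hypothesis on the $P_\ell$'s then forces these leading contributions to cancel, producing the desired limit zero.

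The main obstacle is the colored bookkeeping: identifying, among all edge-colored labelled incidence graphs, the precise subfamily encoding asymptotic freeness, and verifying uniformly that all other configurations contribute $o(1)$. The $d_N^{-1/2}$ improvement in the hypothesis is tight here, which is why Proposition \ref{theo: asyfree} requires a strengthening of the decay assumed in Proposition \ref{theo: esd}; the remaining combinatorics is an extension to colored/decorated graphs of arguments already developed in \cite{MAL12,MAL122}.
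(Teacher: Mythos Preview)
Your approach differs substantially from the paper's. The paper does not perform the colored mixed-trace combinatorics directly; instead it invokes a black-box criterion (Lemma~\ref{Lem:DeduceAsyFree}) extracted from \cite{MAL12,MAL122}: asymptotic freeness of $M_N^{(1)},\ldots,M_N^{(n)},\mathbf Y_N$ follows once one checks, for each single matrix $M_N^{(j)}$ separately, (i) the injective-trace limit $\tau_N^0[T(M_N^{(j)})]\to\one_{(T\text{ double tree})}$, (ii) an asymptotic factorization $\esp\big[\prod_i\tfrac1N\Tr^0[T_i(M_N^{(j)})]\big]-\prod_i\tau_N^0[T_i(M_N^{(j)})]\to 0$ for cyclic $T_i$, and (iii) boundedness $\esp\big[\prod_i\tfrac1N\Tr^0[T_i(M_N^{(j)})]\big]=O(1)$ for arbitrary graphs $T_i$. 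Condition (i) is Lemma~\ref{Lem:esd}; (ii) and (iii) are obtained by rerunning the Section~\ref{Sec:Method1} argument on the disconnected test graphs $T_\sigma$ arising from \eqref{Eq:DiscoTrace}. The sharpened decay $\eps_N(T)=O(d_N^{(1-n)/2})$ enters only for (iii), where $\tilde K_N$ is merely $O(d_N^{-1/2})$ because the $T_i$ are no longer cyclic. All of the color-mixing and the interaction with $\mathbf Y_N$ is thus delegated to the traffic machinery; the paper never writes down a colored expansion.

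Your direct route---expand the mixed trace, factor over colors by independence, and carry out the colored-graph combinatorics---is in principle viable and is closer in spirit to the classical argument in \cite{AGZ}. But what you have written is a plan rather than a proof: you yourself name the ``colored bookkeeping'' as the main obstacle and do not carry it out. Two concrete issues. First, the assertion that the $\mathbf Y_N$-contributions are controlled because of ``bounded $\mathbf Y_N$-entries'' via $\sup_N\|\mathbf Y_N\|<\infty$ is false: operator-norm boundedness does not bound individual entries. What is controlled are normalized traces of words in $\mathbf Y_N$, and reorganizing the vertex sum so that only such quantities appear---with the $M$-edges summed out first---is precisely the nontrivial step you have deferred. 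Second, the scaling heuristic $N^{v-1}(d_N/N)^e\prod_j\eps_N(T_j)$ is not yet justified in the presence of $\mathbf Y_N$-insertions and vertices shared between color components; making this rigorous amounts to redoing, in this special case, the work that \cite{MAL12,MAL122} package into Lemma~\ref{Lem:DeduceAsyFree}.
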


The proof is given in Section \ref{Sec:Method11}. Propositions \ref{theo: esd} and \ref{theo: asyfree} obviously apply to the Erd\"os-R\'enyi graph for which $\mathfrak C_N(T)=0$ for any $T$. We also show these two theorems hold true for the uniform $d_N$-regular graph under slight assumptions below.
\\

Let $N\geq 2$ and $1\leq d_N\leq N-1$ be given integers such that the product $d_N \times N$ is even.
\begin{Th}\label{Prop:eps_N} Assume that $d_N, N-d_N$ and $|\frac N 2 -d_N- \eta\sqrt{d_N}|$ go to infinity as $N\to \infty$ for some constant $\eta$. Then, for the uniform $d_N$-regular graph $G_{N,d_N}$ and for any finite graph $T$ with $n$ edges,
	$$\mathfrak C_N(T) = \left( \frac{d_N}{N} \right)^n\times  O\big(d_N^{-\frac n 2}).$$
 In particular, Theorem \ref{Regesd} and Theorem \ref{RegAF} hold true.
\end{Th}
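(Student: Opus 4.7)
The plan is to expand the correlation function by multilinearity,
\[
\mathfrak{C}_N(T) = \sum_{S \subseteq \{e_1,\ldots,e_n\}} (-\alpha_N)^{n-|S|}\, \mbb P\bigl(S \subseteq G_{N,d_N}\bigr),
\]
to produce asymptotic expansions of each subgraph probability via the switching method, and to use the alternating structure of the inclusion-exclusion to extract the announced cancellations.

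The first step is to set up the switching method for $d_N$-regular graphs. Writing $\mbb P(S \subseteq G_{N,d_N})$ as the ratio $|\mathcal{G}(N,d_N;S)|/|\mathcal{G}(N,d_N)|$, I would introduce the standard edge-switching: given a regular graph $G$ containing an edge $e=\{u,v\}$, pick a disjoint edge $\{x,y\}\in G$ with $\{u,x\},\{v,y\}\notin G$ and replace $\{u,v\},\{x,y\}$ by $\{u,x\},\{v,y\}$. Double-counting the switchings between graphs containing $e$ and graphs missing it, with careful removal of forbidden configurations, yields the ratio $|\mathcal G(N,d_N;S)|/|\mathcal G(N,d_N;S\setminus\{e\})|$ as $\alpha_N$ times an explicit combinatorial correction; iterating edge-by-edge gives, under the imposed hypotheses on $d_N$, an asymptotic expansion of $\mbb P(S \subseteq G_{N,d_N})$ around $\alpha_N^{|S|}$ whose coefficients depend on the incidence structure of $S$ with uniformly controlled remainders.

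The second step is the cancellation analysis. Since $\sum_{S \subseteq [n]}(-1)^{n-|S|} = 0$ for $n\geq 1$, any contribution in the switching expansion that depends on $S$ only through a global invariant such as $|S|$ drops out of the alternating sum; only the genuinely geometric corrections survive. Tracking the order at which such discrimination first appears in the switching expansion -- it scales with the number of edges that must be activated in order to distinguish two subgraphs -- yields an effective gain of $d_N^{-1/2}$ per edge of $T$, producing the announced bound $\mathfrak{C}_N(T) = \alpha_N^n \cdot O(d_N^{-n/2})$. Theorems \ref{Regesd} and \ref{RegAF} then follow immediately by combining this estimate with Propositions \ref{theo: esd} and \ref{theo: asyfree}.

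The main obstacle will be the second step. It requires tracking the switching remainders with enough precision, and to an order of expansion that grows with $n$, to identify which combinatorial contributions survive the alternating summation and to verify that the residual is of the announced order. The hypothesis $\bigl|\tfrac{N}{2} - d_N - \eta\sqrt{d_N}\bigr|\to\infty$ enters precisely here as a quantitative non-degeneracy condition: the denominators appearing in the switching ratios involve factors of order $d_N(N-2d_N)$ up to lower-order corrections, and this assumption prevents them from collapsing to the critical scale $\sqrt{d_N}$ at which the standard switching expansion would lose control of its subleading terms. A subsidiary difficulty, which I expect to be handled by routine inclusion-exclusion bookkeeping, is to absorb the contribution of subgraphs $S$ whose edges of $T$ share vertices.
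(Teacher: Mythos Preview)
Your outline diverges from the paper's proof in a way that leaves a genuine gap at the cancellation step, and it also misidentifies where the hypothesis $|\tfrac N2 - d_N - \eta\sqrt{d_N}|\to\infty$ is used.

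Your plan is to expand each $\mbb P(S\subset G_{N,d_N})$ separately by iterated edge-switchings and then hope that the alternating sum over $S$ kills enough leading terms. The difficulty is that the standard $4$-cycle switching produces, for the ratio $\mbb P(S)/\mbb P(S\setminus\{e\})$, a correction $\alpha_N(1+c_e(S)/d_N+\dots)$ whose coefficients $c_e(S)$ depend on the local incidence structure of $S$ near $e$. After iterating and summing $\sum_S(-1)^{n-|S|}g(S)$ with $g(S)=\mbb P(S)/\alpha_N^{|S|}$, you are computing an $n$-fold Boolean finite difference of $g$. To get $O(d_N^{-n/2})$ you would need $g$ to factor, up to negligible error, as $\prod_{i\in S}(1+O(d_N^{-1/2}))$; but nothing in your switching expansion furnishes such a product structure, and the interactions between edges sharing a vertex are precisely what obstruct it. Your sentence ``it scales with the number of edges that must be activated'' is where the actual combinatorics has to happen, and as written it is an assertion, not an argument.

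The paper avoids this problem by a different organisation. It does \emph{not} estimate each $\mbb P(S)$ and then cancel. Instead it augments every $J\subset[n]$ by auxiliary labelled vertices $w_j',v_j',v_j'',w_j''$ so as to build, for each $j$, a hexagon on which a $6$-cycle switching acts. The key point is that this switching exchanges the configurations ``$e_j\in G$'' and ``$e_j\notin G$'' while preserving all the bookkeeping parameters $L_j,M_j,\ell_j$; consequently the cardinality $\sharp\mcal B_N(J,\dots)$ is \emph{exactly} independent of $J$. The entire $J$-dependence is then isolated in an explicit multiplicative factor $\kappa_N(J)=\prod_{j\in J}(d_N-L_j-\ell_j)\prod_{j\notin J}(d_N-M_j-\ell_j)$, and the alternating sum $\sum_J(-1)^{|J|}/\kappa_N(J)$ factors as $\prod_j\big((d_N-L_j-\ell_j)^{-1}-(d_N-M_j-\ell_j)^{-1}\big)=\prod_j O\big((d_N-\ell_j)^{-2}\big)$. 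This is the mechanism that produces one power of $(d_N-\ell_j)^{-1}$ of \emph{gain} per edge, and it is structural rather than asymptotic.

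Finally, the technical hypothesis does not enter through ``denominators of order $d_N(N-2d_N)$'' in the basic switching. It is used in a separate step: when the common-neighbour count $\ell_j$ is within $\eta\sqrt{d_N}$ of $d_N$, the gain $(d_N-\ell_j)^{-1}$ is too weak, and the paper uses a second, square switching together with the bound $N-2d_N-\eta\sqrt{d_N}\to\infty$ to show that $\sharp\mcal B_N(\cdot,\ell)\le\rho\,\sharp\mcal B_N(\cdot,\ell-1)$ for some $\rho<1$, giving exponential decay in that regime. Your proposal does not contain an analogue of this step.
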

Theorem \ref{Prop:eps_N} is proved in Section \ref{Sec: prooflemma}. As Theorem \ref{Prop:eps_N} readily implies Proposition \ref{Cor:Graphons}, we here expose the proof. 
\begin{proof}[Proof of Proposition \ref{Cor:Graphons}] We recall that we assume Theorem \ref{Prop:eps_N} holds true.

Expending the product in the definition of $\mathfrak C_N(T)$ over the subgraphs $\tilde T$ of $T$, we get
	$$\mathfrak C_N(T)=\sum_{\tilde T \subset T} \big( -\frac{d_N}N\big)^{n-|\tilde T|}\mbb P(\tilde T \subset G_N),$$
where  $|\tilde T|$ denotes the number of edges of $\tilde T$. Note that Theorem \ref{Prop:eps_N} states that
	$$\mathfrak C_N(T) = \big( \frac{d_N}N\big)^{n}\times  O(d_N^{-\frac n 2}) .$$
 We reason by induction on the number of edges of $T$. If $|T|=1$, the statement is obvious. Assume now that $\mbb P(\tilde T \subset G_N)= \big( \frac{d_N}N\big)^{|\tilde T|} \times \big(1 + \tilde \eps_N(\tilde T) \big)$ where $\tilde \eps_N(\tilde T) = O(d_N^{-1 })$ for any $\tilde T$ with at most $n-1$ edges. With this notation, we have
	$$\mbb P(T \subset G_N) = \big( \frac{d_N}N\big)^{n}\times \Big(1- \sum_{\substack{\tilde T \subset T \\ \tilde T \neq T}}(-1)^{n-|\tilde T|} \tilde \eps_N(\tilde T)  +   O(d_N^{-\frac n 2}) \Big).$$
	This gives the desired estimate.
\end{proof}

\section{Proof of Proposition 2.1} \label{Sec:Method1}
Our method uses the so-called injective trace formalism and extended moment method developed in \cite{MAL12}. To explain this method, we need to define more graph theory notations. 
Hereafter, we consider finite non oriented graphs $T=(V, E)$, independent of $N$. Such graphs $T$ are to be seen as test functions that are evaluated at $G_N$ or at the associated matrix $M_N$. Loops and multiple edges are allowed for $T$. We then denote by $\overline{E}$ the collection of pairwise distinct edges of $T$. Then $|\overline{E}|$ counts the number of edges without multiplicity. 
A tree is a graph which is connected and has no cycle nor multiple edge. A fat tree is a graph $T$ such that $\overline{T}:=(V, \overline{E})$ is a tree. A double tree is a fat tree in which the multiplicity of each edge is $2$. We call a simple edge of a graph an edge of multiplicity one which is not a loop. Recall the notation $[N]=\{1\etc N\}$.

\paragraph{}
We here make the entries of the matrices $M_N$ explicit by writing $M_N = \big(M_N(i,j)\big)_{i,j=1\etc N}$. For any finite graph $T=(V,E)$ we set the function $\Tr^0$ (called the injective trace)
\begin{equation}\label{defTauN}
	\Tr_N^0\big[ T(M_N) \big] =  \sum_{ \substack{ \phi: V \to [N] \\ \trm{injective}}} \prod_{\{v,w\} \in E} M_N\big( \phi(v) , \phi(w) \big) .
 \end{equation}

Let $T_k$ be the graph consisting in a simple cycle with $k$ vertices $\{1\etc k\}$, with edges $\{1,2\} \etc \{k-1,k\}, \{k,1\}$. Given a partition $\pi$ of $\{1\etc k\}$, let $T_k^\pi $ be the graph (with possibly multiple edges and loops) obtained by identifying vertices in a same block. Formally, its set of vertices is $\pi$ and its multi-set of edges $E$ are given as follows: there is one edge between two blocks $V_i$ and $V_j$ of $\pi$  for each $n$ in $\{1\etc k\}$ such that $n\in V_i$ and $n+1 \in V_j$ (with notation modulo $k$). Then for any finite connected graph , for any $k\geq 1$ we have  
	\eqa\label{eq:TraceInjTrace}
		 \Tr \, M_N^k  = \sum_{\pi\in \mcal P(k)} \Tr^0\big[T_k^\pi(M_N)\big],
	\qea
where $\mcal P(k)$ is the set of partitions of $[k]$. The formula is obtained by classifying in the sum $\Tr M_N^k = \sum_{i_1\etc i_l} M_N(i_1,i_2) \dots M_N(i_k,i_1)$ which indices are equal or not, see \cite[Section 3.1]{MAL12}.
\par We say that a graph of the form $T=T_k^\pi$ for certains $k$ and $\pi$ is cyclic. A cyclic graph is a finite, connected graph such that there exists a cycle visiting each edge once, taking account into the multiplicity of the edges. By \eqref{eq:TraceInjTrace}, the convergence of the expectation of $ \frac 1N \Tr^0$ for any cyclic graph implies the convergence in moments of the e.s.d. of $M_N$. Our proof of Wigner's law (Proposition \ref{theo: esd}) is based on the following lemma.

\begin{Lem}\label{Lem:esd} Denote $\tau_N^0 = \esp \big[ \frac 1N \Tr^0 \big]$. For any cyclic graph $T$
	\eqa\label{Eq:CVtraffic}
		\tau_N^0\big[T(M_N)\big] \limN 
							\left\{\begin{array}{ccc}
						 1 & \trm{if } T \trm{ is a double tree}\\
						0 & \trm{otherwise}.
							\end{array} \right.
	\qea
\end{Lem}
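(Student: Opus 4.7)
My plan is to exploit the permutation invariance of $G_N$ together with the independence of the weights $\xi_{ij}$ to reduce $\tau_N^0[T(M_N)]$ to an explicit combinatorial sum indexed by subsets of $\bar E(T)$ whose coefficients are the correlation functions $\mathfrak C_N$, and then to match or bound each term against the scaling factor using the hypothesis of Proposition~\ref{theo: esd}.

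First, I fix an injective $\phi_0 : V(T) \to [N]$ and set $v = |V(T)|$, $\bar n = |\bar E(T)|$, $n = \sum_{e \in \bar E} m(e)$. By the joint vertex-permutation invariance of $\big(G_N,(\xi_{ij})\big)$, the quantity $\mu(T) := \esp\big[\prod_{e \in \bar E} M_N(\phi_0(e))^{m(e)}\big]$ will not depend on $\phi_0$, giving $\tau_N^0[T(M_N)] = \frac{(N)_v}{N}\mu(T) = (1+o(1))N^{v-1}\mu(T)$. Next, introducing $Y_{ij} = \xi_{ij}G_N(\{i,j\}) - m_1\alpha_N$ so that $M_N(i,j) = Y_{ij}/\sqrt{d_N(m_2-m_1^2\alpha_N)}$ and conditioning on $G_N$ (the $\xi_{ij}$'s being i.i.d.\ and independent of $G_N$, with $G_N(e)^k = G_N(e)$), I get
\begin{equation*}
\esp_\xi\big[Y_{\phi_0(e)}^{m(e)}\,\big|\,G_N\big] = \beta^{m(e)} + \big(c_{m(e)}-\beta^{m(e)}\big)G_N(\phi_0(e)),
\end{equation*}
with $\beta = -m_1\alpha_N$ and $c_k = \esp[(\xi-m_1\alpha_N)^k]$. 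Expanding the product over $e \in \bar E$ and writing $G_N(e) = \alpha_N + (G_N(e)-\alpha_N)$, I expect (after regrouping) the key identity
\begin{equation*}
\esp\Big[\prod_{e \in \bar E} Y_{\phi_0(e)}^{m(e)}\Big] = \sum_{S \subset \bar E} \prod_{e \in S}\big(c_{m(e)}-\beta^{m(e)}\big)\prod_{e \notin S}\esp[Y^{m(e)}]\cdot \mathfrak C_N(T_S),
\end{equation*}
where $T_S$ is the simple graph with edge set $\{\phi_0(e) : e \in S\}$. A crucial point is that $\esp[Y^{m(e)}] = 0$ when $m(e) = 1$ and $= O(\alpha_N)$ when $m(e) \geq 2$, while the coefficients $c_{m(e)}-\beta^{m(e)}$ remain $O(1)$.

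After the global prefactor $\big(d_N(m_2-m_1^2\alpha_N)\big)^{-n/2}\cdot (N)_v/N$, each term in the above sum is of order $N^{v-1-\bar n}d_N^{\bar n-n/2}\epsilon_N(T_S)$ with $\epsilon_N(T_\emptyset) = 1$ by convention. Because $T$ is cyclic, $\bar T$ is connected, so either (i) $\bar T$ contains a cycle and $v-1-\bar n \leq -1$, or (ii) $\bar T$ is a tree and $\bar n = v-1$; in case (ii) every $m(e)$ must be even, because a closed walk on a tree traverses each edge an even number of times, hence $\bar n - n/2 \leq 0$ with equality exactly when $T$ is a double tree. A direct evaluation of the free term ($S = \emptyset$) then yields the limit $1$ for the double tree and $0$ in every other case, while the $|S|=1$ terms contribute $0$ because $\mathfrak C_N$ on a single edge vanishes.

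The hardest step will be handling the corrections for $|S| = k \geq 2$. The requirement $\esp[Y^{m(e)}] \neq 0$ for each $e \notin S$ forces the $j$ edges of multiplicity $1$ in $T$ to all lie in $S$, so $k \geq j$ and $\bar n - n/2 \leq j/2$. Combining the hypothesis $\epsilon_N(T_S) \to 0$ for $k = 2$ and $\epsilon_N(T_S) = o(d_N^{1-k/2})$ for $k \geq 3$ with the crude bound $d_N \leq N$, I will check case by case that $N^{v-1-\bar n}d_N^{\bar n-n/2}\epsilon_N(T_S) = o(1)$: in case (ii) this is immediate since $j = 0$ and $\bar n - n/2 \leq 0$, while in case (i) the prefactor $N^{v-1-\bar n} \leq N^{-1}$ absorbs the excess $d_N^{j/2}$. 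This combinatorial bookkeeping on multiplicities and subsets is the only real obstacle.
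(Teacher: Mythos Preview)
Your approach is correct and essentially the same as the paper's: both reduce via permutation invariance to a single-injection expectation, expand it in terms of the correlation functions $\mathfrak C_N$, and control each contribution through the exponents $|V|-1-|\bar E|$ and $|\bar E|-\tfrac{|E|}{2}$ combined with the hypothesis on $\eps_N$. Your direct $S$-expansion (writing $G_N(e)=\alpha_N+(G_N(e)-\alpha_N)$ after integrating out the weights) is a slightly cleaner bookkeeping than the paper's binomial expansion plus Lemma~\ref{Lem:EpskN}, but the substance is identical; just make explicit that $(m_2-m_1^2\alpha_N)^{-1}=O(1)$, which is exactly where the hypothesis ``$N-d_N\to\infty$ or $\xi$ non-constant'' is used.
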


\begin{proof}[Proof of Proposition \ref{theo: esd}] We here assume that Lemma \ref{Lem:esd} holds true. One then gets from \eqref{eq:TraceInjTrace} and \eqref{Eq:CVtraffic} that 
$$\esp  \Tr \, M_N^k  = \sum_{\pi\in \mcal P(k): T_k^\pi \text{ is a double tree}}1.$$
We may recall that the $k$-th moment of the semicircular law, given by the $k$-th Catalan number, is the number of oriented rooted trees with $\frac k 2$ edges (see \cite{GUI}). As all oriented rooted trees can be obtained from a partition $\pi$ in a unique way, this finishes the proof.\end{proof}


We split the proof of Lemma \ref{Lem:esd} into the cases where $T$ is a double tree or not, and state below Lemmas \ref{Lem:esd1} and \ref{Lem:esd2} accordingly. Let $T=(V,E)$ be a cyclic graph. Denote its edges by $e_1=(v_1,w_1) \etc e_n=(v_n,w_n)$ (with possible repetition due to multiplicity) and set
	\eq
		\delta^0_N\big[ T(M_N) \big] = \esp \Big[ \prod_{i=1}^n M_N\big( \phi(v_i), \phi(w_i) \big)\Big],
	\qe
where $\phi$ is any injection $V\to [N]$. Since the matrix is invariant by conjugation by permutation matrices, this quantity does not depend on $\phi$. 
 Then, by the definition of $\Tr^0$, namely Formula \eqref{defTauN}, we have 
\eq
	\tau_N^0\big[ T(M_N) \big] =  \frac 1 N \frac{ N!}{(N-|V|)!}\delta_N^0\big[ T(M_N) \big].
\qe
Denote $\alpha_N=\big(\frac{d_N}N\big)$ and recall that $  M_N = \frac {  A_N - m_1 \alpha_N J_N }{ \sqrt{ d_N(m_2-m_1^2  \alpha_N)}}$ with the notations in \eqref{Eq:DefMN}. Then, using the multi-linearity of $\delta_N^0$ (with respect to the edges of the graph), for any finite graph $T$ we get
	\eqa\label{Eq:EquationTr0}
		\tau_N^0\big[T(M_N)\big] &=& N^{|V|-1}{d_N}^{-\frac{ |E|}2} \times  \delta_N^0\bigg[T\Big(  \frac{ A_N - m_1 \alpha_N J_N}{\sqrt{m_2-m_1^2 \alpha_N}}    \Big)\bigg]\times\big(1+o(N^{-1})\big).
	\qea
For a given graph $T$, we denote by $s$ 	the number of simple edges of $T$.
We can then write that 
	\eqa\label{Eq:K_N}
		N^{|V|-1}{d_N}^{-\frac{ |E|}2} = \underbrace{N^{|V|-1 - |\bar E|} {d_N}^{|\bar E |-\frac{ |E|}2-\frac s2}}_{K_N} \times   d_N^{\frac s2} {\alpha_N}^{-|\bar E|}.
	\qea
We now appeal to the following classical lemma of graph theory (see \cite[Lemma 1.1]{GUI}).

\begin{Lem}\label{Lem:VEC} For a finite graph with $\mcal V$ vertices, $\mcal E$ edges and $\mcal C$ connected components, then $\mcal E + \mcal C - \mcal V$
is the number of cycles of the graph. This is the maximal number of edges we can suppress without disconnecting a component of the graph. In particular, 
	$$\mcal V-\mcal E - \mcal C  \leq 0$$
with equality if and only if the graph is a forest, i.e. its components are trees.
\end{Lem}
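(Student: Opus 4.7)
The plan is to reduce to the case of a connected graph and then peel off edges one at a time. Each of $\mathcal V$, $\mathcal E$, $\mathcal C$, the cycle count, and the maximum number of removable edges is additive over connected components, and $\sum_i (\mathcal E_i - \mathcal V_i + 1) = \mathcal E - \mathcal V + \mathcal C$ where the sum runs over components. Since a forest is a disjoint union of trees, it suffices to prove: in a connected graph $G$, the maximum number of edges one may delete while preserving connectivity is $\mathcal E - \mathcal V + 1$, and this equals the ``number of cycles.''

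For the connected case, I first recall the standard fact that any tree on $\mathcal V$ vertices has exactly $\mathcal V - 1$ edges (by induction on $\mathcal V$, using that any finite tree has a leaf whose removal yields a smaller tree). Pick a spanning tree $T \subset G$. Then the $\mathcal E - (\mathcal V - 1)$ edges in $E \setminus T$ can be removed one by one without disconnecting $G$, which realizes the lower bound. Conversely any connected spanning subgraph on $\mathcal V$ vertices has at least $\mathcal V - 1$ edges, so no more than $\mathcal E - \mathcal V + 1$ edges may be deleted while preserving connectivity, which gives the matching upper bound.

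To identify this number with the ``number of cycles,'' I interpret the latter as the cyclomatic number, i.e.\ the dimension of the cycle space (equivalently, the first Betti number of $G$ viewed as a $1$-complex). Fixing a spanning tree $T$ in each component, every edge $e \in E \setminus T$ closes a unique cycle $C_e$ together with the $T$-path between its endpoints, and the $C_e$ form a basis of the cycle space; so the cycle count is precisely $\mathcal E - \mathcal V + \mathcal C$. The ``in particular'' statement then follows because this number is non-negative and vanishes iff each component is a tree, i.e.\ $G$ is a forest.

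The main obstacle here is really only a conceptual one: pinning down the intended meaning of ``number of cycles'' as the cyclomatic number rather than the (much larger) total count of embedded cycles in $G$. Once that convention is fixed, the proof is entirely standard spanning-tree combinatorics.
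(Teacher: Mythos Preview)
Your proof is correct. The paper does not actually prove this lemma; it simply states it as a classical fact and refers to \cite[Lemma 1.1]{GUI}. Your spanning-tree argument is the standard proof, and your remark that ``number of cycles'' must be read as the cyclomatic number (rank of the cycle space) rather than the raw count of embedded cycles is exactly the right clarification.
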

We first apply Lemma \ref{Lem:VEC} to the graph  $\bar T$ obtained by forgetting the multiplicity of the edges of $T$ so that $$\mcal V = |V|, \mcal E= |\bar E| \text{ and }\mcal C=1.$$ 
Hence, $K_N=1$ if and only if $T$ is a fat tree whose edges are of multiplicity one or two. When $T$ is a double tree, $K_N=1$ and $s=0$. Hence to prove the convergence $\tau_N^0\big[T(M_N)\big] \limN 1$ for double trees $T$, it then suffices to prove the

\begin{Lem}\label{Lem:esd1} For any double tree $T$
	\eqa
		\label{Eq:Delta1}
		\delta_N^0\bigg[T\Big(  \frac{ A_N - m_1\alpha_N J_N}{\sqrt{m_2-m_1^2\alpha_N}}    \Big)\bigg] & = &   \left( \frac{d_N}N\right)^{|\bar E|} \times  \big( 1+o(1)\big).
	\qea
\end{Lem}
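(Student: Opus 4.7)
The plan is to fix an injection $\phi: V \to [N]$ and exploit that, for a double tree, the product over the $2|\bar E|$ edges of $T$ collapses into a product of squares:
$$\delta_N^0\bigl[T(B_N)\bigr] \;=\; \esp \prod_{e \in \bar E} B_N(\phi(e))^2, \qquad B_N := \frac{A_N - m_1 \alpha_N J_N}{\sqrt{m_2 - m_1^2 \alpha_N}}.$$
The right-hand side does not depend on $\phi$ by vertex-permutation invariance. Next I would use $G_N(e)^2 = G_N(e)$ to decompose each factor as
$$B_N(e)^2 \;=\; \alpha_N \;+\; c_N\, X_e \;+\; W_e, \qquad c_N := \frac{m_2 - 2 m_1^2 \alpha_N}{m_2 - m_1^2 \alpha_N}, \qquad X_e := G_N(e) - \alpha_N,$$
where $W_e$ is a polynomial in the weight $\xi_e$ (with coefficients depending on $G_N(e)$) having zero $\xi$-conditional mean given the graph. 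Under the hypothesis of Proposition~\ref{theo: esd} the constant $c_N$ stays bounded, since $m_2 - m_1^2 \alpha_N$ is then bounded away from $0$.

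The decisive observation is that for distinct $e \in \bar E$ the variables $W_e$ involve distinct, independent weights $\xi_e$, and each is individually centered given $G_N$. Expanding the product $\prod_e(\alpha_N + c_N X_e + W_e)$ into the natural $3^{|\bar E|}$ monomials and integrating out the $\xi$'s first, every monomial containing even a single $W_e$-factor vanishes by conditional centering and independence. I would therefore be left with
$$\esp \prod_{e \in \bar E} B_N(e)^2 \;=\; \sum_{S \subseteq \bar E} \alpha_N^{|\bar E|-|S|} \, c_N^{|S|} \, \mathfrak C_N(\bar T_S),$$
where $\bar T_S$ denotes the spanning subgraph of $\bar T$ carrying exactly the edges in $S$.

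To conclude, I would bound the sum term by term. The main term $S = \emptyset$ contributes exactly $\alpha_N^{|\bar E|}$, which is the claimed leading order. Singleton subsets contribute nothing because $\mathfrak C_N$ of a single edge vanishes by centering. For $|S| = k \geq 2$, the correlation hypothesis of Proposition~\ref{theo: esd} gives $\mathfrak C_N(\bar T_S) = \alpha_N^k \eps_N(\bar T_S)$ with $\eps_N \to 0$, both when $k=2$ (directly) and when $k \geq 3$ (since $o(d_N^{1-k/2}) = o(1)$). As $\bar E$ is finite and $c_N$ is bounded, the sum of all non-dominant terms is therefore $o(\alpha_N^{|\bar E|})$, which gives the announced estimate.

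I expect the main obstacle to be not analytical but algebraic: correctly isolating the $\xi$-centered piece $W_e$ so that distinct edges genuinely yield independent centered factors is the only delicate ingredient. Once that clean decomposition is in place, the remainder of the argument is a finite combinatorial expansion whose only quantitative input is the hypothesized decay of the correlation functions $\mathfrak C_N$.
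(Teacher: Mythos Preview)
Your argument is correct and follows essentially the same route as the paper: both first integrate out the weights $\xi_e$ (the paper by conditioning on $\xi$ and invoking the approximate factorization $\esp\big[\prod G_N(e_i)\big]=\alpha_N^{|S|}(1+o(1))$, you via the cleaner explicit decomposition $B_N(e)^2=\alpha_N+c_NX_e+W_e$ and conditional centering of $W_e$), and then feed the remainder into the correlation hypothesis $\mathfrak C_N(\cdot)=\alpha_N^{k}\eps_N$.

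There is one inaccuracy worth flagging. Your assertion that the hypotheses of Proposition~\ref{theo: esd} force $m_2-m_1^2\alpha_N$ to be bounded away from $0$ is false: when the $\xi_{i,j}$ are constant (say equal to $1$) one has $m_2-m_1^2\alpha_N=1-\alpha_N$, and a sequence such as $d_N=N-\log N$ satisfies $N-d_N\to\infty$ while $1-\alpha_N\to 0$, so $c_N=(1-2\alpha_N)/(1-\alpha_N)\to-\infty$ and your final bound $c_N^{|S|}\eps_N(\bar T_S)\to 0$ is no longer automatic. The fix is easy and is exactly what the paper does at the start of the proof of Lemma~\ref{Lem:esd2}: for constant weights, pass to the complementary graph to reduce to $\alpha_N\leq \tfrac12$, whence $c_N$ is indeed bounded. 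Alternatively, carry out your expansion with the unnormalized factors $(A_N(e)-m_1\alpha_N)^2=\alpha_N(m_2-m_1^2\alpha_N)+(m_2-2m_1^2\alpha_N)X_e+\tilde W_e$, so that the possibly small quantity $(m_2-m_1^2\alpha_N)^n$ cancels exactly against the normalization only at the very end.
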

 \begin{proof}[Proof of Lemma \ref{Lem:esd1}] Let $T=(V,E)$ be a double tree. Denote $e_1 \etc e_n$ the edges of $T$ without multiplicity (so that each edge is repeated twice in $T$). We want to prove that
	$$ \alpha_N^{-|\bar E|} \times \delta_N^0\bigg[T\Big(  \frac{ A_N - m_1\alpha_N J_N}{\sqrt{m_2-m_1^2\alpha_N}}    \Big)\bigg] =\alpha_N^{-n} (m_2-m_1^2\alpha_N)^{-n}  \times \esp\bigg[ \prod_{i=1}^n \big(  A_N(e_i) - m_1 \alpha_N  \big)^2 \bigg]$$
	converges to one. By the proof of Proposition \ref{Cor:Graphons} one has $\esp\big[ \prod_{i=1}^n G_N(e_i) \big]= \mbb P(T\subset G_N) = \alpha_N^{n}\big(1 + o(1) \big) = \prod_{i=1}^n \esp\big[ G_N(e_i) \big]\big(1 + o(1) \big)$ (we only use $\mathfrak C_N(T) = \alpha^n\big( 1+o(1)\big)$ for any $T$). Denote by $\esp\big[ \, \cdot \, \big | \, \xi \big]$ conditional expectation with respect to the algebra spanned by the i.i.d. random variables $\xi_{i,j}, 1\leq i< j\leq N$. Denote $\xi(e)=\xi_{i,j}$ for $e=\{i,j\}$. One has
	\eq
		   \lefteqn{\esp\bigg[ \prod_{i=1}^n \big(  A_N(e_i) - m_1 \alpha_N  \big)^2 \bigg] }\\
		& = &  \esp\bigg[   \esp\Big[  \prod_{i=1}^n \big( \xi_{e_i}G_N(e_i) - m_1 \alpha_N  \big)^2 \, \Big| \, \xi \Big] \ \bigg]\\
		    & = & \esp\bigg[  \prod_{i=1}^n  \esp\Big[ \xi_{e_i}^2G_N(e_i) -2m_1 \alpha_N \xi_{e_i}G_N(e_i)+ m_1^2 \alpha_N^2   \, \Big| \, \xi \Big]\times   \big(1+o(1)\big) \ \bigg]\\
		    & = &  \prod_{i=1}^n  \left( m_2 \alpha_N -m_1^2 \alpha_N^2 \right)\times  \big(1+o(1)\big) \\
		   &  = & \alpha_N^{n} (m_2-m_1^2\alpha_N)^{n}  \times \big(1+o(1)\big)
	\qe
as desired.
\end{proof}

Let us now consider a cyclic graph $T$ which is not a double tree. Since $T$ is cyclic, it is not possible that $T$ has an edge of order one or three and is simultaneously a fat tree. So either $T$ has an edge of multiplicity at least  four, or $\bar T$ admits a cycle. Hence for a cyclic graph which is not a double tree, in all cases one has that $$K_N = O( d_N^{-1})$$ (since $d_N\leq N-1$). By Formula \eqref{Eq:K_N} and Lemma \ref{Lem:VEC}, the fact that $\tau_N^0\big[T(M_N)\big] \limN 0$ when $T$ is not a double tree follows from the

\begin{Lem}\label{Lem:esd2} For any cyclic graph $T$ which is not a double tree, with $n$ edges and $s$ simple edges,
	\eqa
		\label{Eq:Delta2}
		 \delta_N^0\bigg[T\Big(  \frac{ A_N - m_1\alpha_N J_N}{\sqrt{m_2- m_1^2\alpha_N}}    \Big)\bigg]  & = &    \alpha_N^{|\bar E|} \times  o\big( d_N^{1-\frac s2}  \big).
	\qea
\end{Lem}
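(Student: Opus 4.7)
The plan is to expand $\delta_N^0\big[T\big(\tfrac{A_N-m_1\alpha_N J_N}{\sqrt{m_2-m_1^2\alpha_N}}\big)\big]$ into a finite linear combination of correlation functions $\mathfrak C_N(U)$ with $U$ a subgraph of $T$, via a two-level centering of the product, and then to apply the hypothesis $\mathfrak C_N(U) = \alpha_N^{|U|}\eps_N(U)$ together with bookkeeping of the resulting $\alpha_N$-exponents. Since $\phi$ is injective (so $J_N$-entries on the edges that appear are equal to $1$, and the normalization $(m_2-m_1^2\alpha_N)^{-n/2}$ is bounded away from $0$ and $\infty$), it suffices to estimate
\[
\E\Big[\prod_{k=1}^n \big(\xi_{f(k)}\,G_N(f(k)) - m_1\alpha_N\big)\Big],
\]
where $f(1),\dots,f(n)$ list the $n$ edges of $T$ with multiplicity.

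The first expansion writes each factor as $\xi\,G_N(e) - m_1\alpha_N = \alpha_N(\xi-m_1) + \xi\,(G_N(e)-\alpha_N)$, producing $2^n$ terms. I then group slots by the underlying distinct edge $e\in \bar E$ of multiplicity $m_e$: let $A_e$ be the number of slots of $e$ in the second summand and $B_e=m_e-A_e$ in the first. Independence of the $\xi_{i,j}$ (from one another and from $G_N$) factors out their expectations; each is finite and vanishes exactly when a simple edge is in the first summand, hence surviving configurations force $A_e=1$ for every simple edge. For the second expansion, since $G_N(e)\in\{0,1\}$, an elementary computation gives, for each $A_e\geq 1$,
\[
(G_N(e)-\alpha_N)^{A_e} = c_{A_e}(G_N(e)-\alpha_N) + r_{A_e},
\qquad |c_{A_e}|\leq 2,\ r_1=0,\ |r_{A_e}|=O(\alpha_N)\ \text{for}\ A_e\geq 2.
\]
Expanding the product of these identities over $\{e:A_e\geq 1\}$ yields a sum indexed by subsets $U\subseteq\{e:A_e\geq 1\}$ containing $\{e:A_e=1\}$, the term labelled by $U$ carrying the factor $\mathfrak C_N(U)$.

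Putting everything together, each term of the double expansion has magnitude at most $C\cdot \alpha_N^{\sum_e B_e + p'}\,|\mathfrak C_N(U)|$, where $p':=\#\{e:A_e\geq 1\}$, using the key combinatorial identity $|U|+\#\{e:A_e\geq 2,\ e\notin U\}=p'$. Inserting $|\mathfrak C_N(U)|=\alpha_N^{|U|}|\eps_N(U)|$ and normalizing by $\alpha_N^{|\bar E|}=\alpha_N^p$, the bound per term becomes $C\cdot \alpha_N^{\sum_e B_e+p'-p}|\eps_N(U)|$. The main technical obstacle is the exponent bookkeeping. The exponent $\sum_e B_e + p' - p$ is always $\geq 0$: indeed $\sum_e B_e\geq \sum_{e:A_e=0} m_e\geq p-p'$. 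Furthermore, $U$ contains every simple edge, so $|U|\geq s$, and the hypothesis on $\eps_N$ yields $|\eps_N(U)|=o(d_N^{1-s/2})$ in every regime: directly for $s\geq 3$ via $1-|U|/2\leq 1-s/2$; for $s\in\{0,1,2\}$ by a short case check using $\eps_N(U)=0$ when $|U|=1$ and using that $d_N^{1-s/2}\to\infty$ for $s<2$ absorbs bounded $\eps_N$. Summing the finitely many terms in the decomposition gives $\alpha_N^{|\bar E|}\cdot o(d_N^{1-s/2})$, as claimed. The most delicate configuration is $s=0$ with $U=\emptyset$, where $\mathfrak C_N(\emptyset)=1$ provides no direct decay; there the required smallness comes either from $\alpha_N^{\sum_e B_e + p'-p}=O(\alpha_N)=o(1)$ when the exponent is strictly positive, or from the trivial bound $O(1)=o(d_N)$ when it vanishes.
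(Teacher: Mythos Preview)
Your argument is correct and reaches the same conclusion as the paper, but the decomposition is organized differently. The paper binomially expands each $(A_N(e)-m_1\alpha_N)^{p_e}$ directly, uses $G_N(e)^m=G_N(e)$, and then reduces to an auxiliary lemma (Lemma~\ref{Lem:EpskN}) on mixed products $\E\big[\prod_{i\leq k}G_N(e_i)\prod_{i>k}(G_N(e_i)-\alpha_N)\big]$, which it proves by a short recursion on $k$. You instead perform a \emph{two-level} centering via $\xi G_N - m_1\alpha_N=\alpha_N(\xi-m_1)+\xi(G_N-\alpha_N)$ and then linearize $(G_N-\alpha_N)^{A}$, which lets you land directly on the correlation functions $\mathfrak C_N(U)$ without passing through the intermediate $\eps_N^{(k)}$ quantities. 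This is a genuine simplification: the simple-edge constraint $A_e=1$ pops out from $\E[\xi-m_1]=0$, and no separate lemma is needed. Two small remarks: (i) in your displayed bound ``$C\cdot\alpha_N^{\sum_e B_e+p'}\,|\mathfrak C_N(U)|$'' the exponent should be $\sum_e B_e+p'-|U|$ before you substitute $\mathfrak C_N(U)=\alpha_N^{|U|}\eps_N(U)$ --- your next line is nonetheless correct; (ii) the claim $O(\alpha_N)=o(1)$ can fail if $d_N/N\not\to 0$, but you do not actually need it: $\alpha_N^{\,\text{pos}}\leq 1$ already gives $O(1)=o(d_N)$ when $s=0$. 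Also, you should state (as the paper does) that loops in $T$ make $\delta_N^0$ vanish trivially, and that the boundedness of $(m_2-m_1^2\alpha_N)^{-n/2}$ follows either from $m_2>m_1^2$ when the $\xi_{i,j}$ are non-constant or from the complementary-graph reduction to $\alpha_N\leq\tfrac12$ when they are constant.
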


\begin{proof}[Proof of Lemma \ref{Lem:esd2}] By multi linearity, 
	$$\delta_N^0\bigg[T\Big(  \frac{ A_N - m_1\alpha_N J_N}{\sqrt{m_2- m_1^2\alpha_N}}    \Big)\bigg]  = (m_2-m_1^2\alpha_N)^{-\frac n2} \delta_N^0\big[T ( A_N - m_1\alpha_N J_N  )\big].$$ 
We claim that, without loss of generality, one can assume that $m_2-m_1^2\alpha_N$ is bounded away from zero. Indeed, remark first that by the Cauchy Schwarz inequality, one has $m_1^2\leq m_2$ with equality if and only if the random variables $\xi_{i,j}$ are constant. Hence, if the random variables are not constant, since $\alpha_N\leq 1,$ then $m_2-m_1^2\alpha_N$ is bounded away from zero. If the random variables are constant, one can assume this constant is one (this does not change the distribution of $M_N$). Moreover, recall we considered in the introduction the complementary graph (for which edges belong to the graph if and only if edges do not belong to the initial graph). For unweighted graphs, $-M_N$ is distributed as the matrix associated to the complementary graph and we can assume $d_N \leq \frac N 2$. Finally $m_2-m_1^2\alpha_N = 1-\alpha_N$ is always bounded away from zero.
\\
\par Hence it is sufficient to prove 
	\eqa\label{Eq:Delta2bis}
		\delta_N^0\big[T ( A_N - m_1\alpha_N J_N  )\big]   =    \alpha_N^{|\bar E|} \times  o \big( d_N^{1-\frac s2}  \big),
	\qea
where $s$ is the number of simple edges of $T$. Choose an enumeration of the edges of $T$ : $ E = \{e_1 \etc e_n\}$, with possible repetitions. Then one has that
\eqa\label{Eq:Lien}
	\delta_N^0\big[T ( A_N - m_1\alpha_N J_N  )\big]  = \esp \Big[ \prod_{i=1}^n \big(A_N(e_i) - m_1\alpha_N\big) \Big],
\qea
where $A_N(e_i)$ stands for the random variables $A_N(v_i,w_i)$ whenever $e_i=\{v_i,w_i\}$. 
\par If $T$ is a simple graph, then
	\eqa\label{Eq:RecallEps}
		\mathfrak C_N(T) := \esp \Big[ \prod_{i=1}^n \big(G_N(e_i) - \alpha_N\big) \Big] = \alpha_N^n \times \eps_N(T),
	\qea
where $G_N(e_i) = \one_{e_i \trm{ belongs to } G_N}$ and $\eps_N(T)=o(d_N^{1-\frac n2})$. Equation \eqref{Eq:RecallEps} gives the good estimates \eqref{Eq:Delta2bis} if $T$ is a simple graph. Indeed, in that case, since the $\xi_{i,j}$ are i.i.d., we obtain $\delta_N^0\big[T ( A_N - m_1\alpha_N J_N  )\big] = m_1 \times \mathfrak C_N(T) = o(d_N^{1-\frac n2})$. We then need to extend this fact for non simple graphs. We shall use the following lemma.

\begin{Lem}\label{Lem:EpskN} Let $T$ be a simple graph with edges $e_1 \etc e_n$. Then, for $k=0\etc n$, 
\eqa
	 \esp\Big[\prod_{i=1}^k G_N(e_i) \prod_{i=k+1}^{n} \big( G_N(e_i) - \alpha_N \big)  \Big]  = \alpha_N^n \big(\delta_{n,k} + \eps_N^{(k)}(T) \big),
\qea
where $\eps_N^{(k)}(T)$ tends to zero and $\eps_N^{(k)}(T)= o(d_N^{1-\frac {n-k}2 })$. Recall that the Kronecker symbol $\delta_{n,k}$ is one if $n=k$ and  zero otherwise. 
\end{Lem}

\begin{proof} Note that $\eps_N^{(0)}(T) = \eps_N(T)$, where $\eps_N(T)$ is defined in \eqref{Eq:RecallEps}, and $ \mbb P( T \subset G_N) = \alpha_N^n \times \big( 1 + \eps_N^{(n)} (T) \big)$. Writing $G_N(e_k) = \big(G_N(e_k)  - \alpha_N\big) + \alpha_N$ we obtain $\eps_N^{(k)}(T)  = \eps_N^{(k-1)}(T) + \eps_N^{(k-1)}(T\setminus{e_k}).$
%
%
By a simple recursion, $\eps_N^{(k)}(T)$ is a finite linear combination of $\eps_N(\tilde T)$ for subgraphs $\tilde T$ of $T$ with at least $n-k$ edges. Hence the result follows.
\end{proof}

We can now prove \eqref{Eq:Delta2bis}, using Formula \eqref{Eq:Lien} and Lemma \ref{Lem:EpskN}. Let $T$ be a simple graph with edges $e_1 \etc e_n$. Consider integers $p_j, j=1, 2, \ldots,n$ such that $p_j\geq2$ for $j=1\etc k$ and  $p_j=1$ for $j=k+1\etc n$. Let prove the estimate for the (non simple) graph obtained from $T$ by setting $p_j$ for the multiplicity of $e_j$. One has
\eqa
	\lefteqn{\esp\Big[ \prod_{i=1}^n \big(A_N(e_i) - m_1\alpha_N \big)^{p_j} \Big] }\\
	& = &  \sum_{ \substack{\ell_{1}  \in  [p_{1}]\\ \dots, \ell_k \in [p_k]}} \binom{p_1}{\ell_1}\dots \binom{p_k}{\ell_k} (-m_1\alpha_N)^{\ell_1 + \dots + \ell_k} \esp\Big[ \prod_{i=0}^{k} A_N(e_i)^{p_i-\ell_i} \times \prod_{i=k+1}^n \big(A_N(e_i) - m_1\alpha_N \big)\Big]\nonumber\\
	& = &  \sum_{ \substack{\ell_{1}  \in  [p_{1}]\\ \dots, \ell_k \in [p_k]}} \binom{p_1}{\ell_1}\dots \binom{p_k}{\ell_k} (-m_1\alpha_N)^{\ell_1 + \dots + \ell_k} \prod_{i=1}^k\esp[\xi]^{p_i-\ell_i}  m_1^{n-k}\nonumber\\
	& & \times  \esp\Big[ \prod_{i=0}^{k} G_N(e_i)^{p_i-\ell_i} \times \prod_{i=k+1}^n \big(G_N(e_i) - \alpha_N \big)\Big],\nonumber
	\qea
where $\xi$ is distributed as the $\xi_{i,j}$. Note that $G_N(e_i)^{m-\ell}=G_N(e_i)$ for any $\ell \neq m$. Let fix integers $\ell_j \in [p_j]$ and denote by $C$ is the number of $\ell_i$'s for which $p_i-\ell_i$ is nonzero. Then, by Lemma \ref{Lem:EpskN}, the associated expectation on the left hand side is of the form $\alpha_N^{C+(n-k)}\eps_N^{(C)}(\tilde T)$ for a graph $\tilde T$ with $C+(n-k)$ edges . For each $\ell_i=p_i$ we take an $\alpha_N$ from the $\ell_1 + \dots + \ell_k$, and, since $\alpha_N\leq 1$, we get 
\eq
	\esp\Big[ \prod_{i=1}^n \big(A_N(e_i) - m_1\alpha_N \big)^{p_j} \Big]= \alpha_N^n \times o( d_N^{1-\frac {n-k} 2}).
\qe
This gives the estimate \eqref{Eq:Delta2} and proves that $\tau_N^0\big[T(M_N)\big] \limN 0$ when $T$ is not a double tree.
\end{proof}

\paragraph{}
Hence we have proved Lemmas \ref{Lem:esd1} and \ref{Lem:esd2}, which gives Lemma \ref{Lem:esd} and finishes the proof of Proposition \ref{theo: esd}.

\section{Proof of Proposition 2.2}\label{Sec:Method11}

The proof of the Asymptotic freeness needs only slight modifications of the previous part, thanks to results from \cite{MAL12} and \cite{MAL122}. We use the notations of Lemma \ref{Lem:esd}. To establish Proposition \ref{theo: asyfree}, we use the following Lemma.

\begin{Lem}\label{Lem:DeduceAsyFree} Let $\mbf M_N =(M_N^{(j)})_{j=1\etc n}$ be a family of independent random matrices. Assume that each matrix is invariant in law by conjugation by permutation matrices, and that for any $j=1\etc n$
\begin{enumerate}
	\item for any cyclic graph $T$, $\tau_N^0\big[ T(M_N^{(j)})\big] \limN \one_{(T \trm{ is a double tree})} $,
	\item for any cyclic graphs $T_1 \etc T_k$, 
		$$\esp \Big[\prod_{i=1}^k \frac 1 N \Tr^0 \big[T_i(M_N^{(j)})\big]  \Big] - \prod_{i=1}^k \tau_N^0 \big[T_i(M_N^{(j)})\big] \limN 0.$$
	\item for any (possibly non-cyclic) graphs $T_1 \etc T_k$, 
		$$\esp \Big[\prod_{i=1}^k \frac 1 N \Tr^0 \big[T_i(M_N^{(j)})\big]  \Big] =O(1).$$
\end{enumerate}
Let $\mbf Y_N$ be a family of deterministic matrices converging in $^*$-distribution, whose matrices are uniformly bounded in operator norm. Then $M_N^{(1)} \etc M_N^{(n)}, \mbf Y_N$ are asymptotically free.
\end{Lem}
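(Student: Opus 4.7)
The plan is to run the injective-trace expansion from Section \ref{Sec:Method1} on joint $^*$-moments of $\mbf M_N = (M_N^{(j)})_{j=1\etc n}$ and $\mbf Y_N$, and then recognize the resulting combinatorics as those of free semicircular variables which are free from the limit of $\mbf Y_N$. The argument follows the traffic-probability framework of \cite{MAL12, MAL122}: hypothesis 1 identifies the individual limiting traffics as ``semicircular'', hypothesis 2 gives asymptotic traffic independence, and hypothesis 3 provides the uniform bounds needed to mix the random matrices with the deterministic family.

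The first step is to extend Formula \eqref{eq:TraceInjTrace} to mixed products. Given a monomial $P_1(M_N^{(i_1)}, \mbf Y_N) \dots P_k(M_N^{(i_k)}, \mbf Y_N)$, expanding $\frac 1N \Tr$ of the product by classifying summation indices according to equalities produces a finite sum of injective traces of the form $\frac 1N \esp \Tr^0[T^\pi(\mbf M_N, \mbf Y_N)]$, where $T^\pi$ is a \emph{colored} cyclic graph whose edges carry either an index $j \in J$ (indicating a factor $M_N^{(j)}$) or a label selecting a specific matrix from $\mbf Y_N$. The second step is to factorize the expectation of each colored injective trace. Since the $M_N^{(j)}$'s are mutually independent and each is invariant in law by conjugation by permutation matrices, averaging the vertex-labeling $\phi$ over the symmetric group and using hypothesis 2, the expectation $\esp \Tr^0[T^\pi(\mbf M_N, \mbf Y_N)]$ factorizes asymptotically as a product of $\tau_N^0[T^\pi_{(j)}(M_N^{(j)})]$ over the monochromatic subgraphs $T^\pi_{(j)}$ of $T^\pi$, times a ``skeleton'' injective trace supported on the $\mbf Y_N$-labelled edges. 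Hypothesis 3 keeps the error in this factorization uniformly $O(1)$, so that only leading terms survive in the limit.

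The third step is to identify the limit. By hypothesis 1, each monochromatic cyclic subgraph $T^\pi_{(j)}$ contributes $1$ in the limit when it is a double tree and $0$ otherwise. The remaining trace over the $\mbf Y_N$-labelled edges converges thanks to the assumed $^*$-distribution convergence of $\mbf Y_N$, the uniform operator norm bound then upgrading this to control of products of such traces. One then checks that the resulting combinatorial sum matches the standard Dykema/Voiculescu formula for a free family of standard semicircular elements free from the $^*$-distribution limit of $\mbf Y_N$. In particular, fixing polynomials $P_j$ with vanishing limits in $^*$-distribution and alternating indices $i_1 \neq i_2 \neq \dots \neq i_k$, the colored graphs $T^\pi$ carry alternating colors around the cycle, so no $T^\pi_{(j)}$ can be a disjoint union of double trees (alternation always forces a cycle or a higher-multiplicity edge in at least one color). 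Hypothesis 1 therefore sends each such term to $0$, which is precisely \eqref{Eq:Freeness}.

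The main obstacle will be the bookkeeping in step two, that is making rigorous the reduction of a colored injective trace to a product of monochromatic injective traces weighted by a $\mbf Y_N$-skeleton. In the pure traffic setting this is the ``asymptotic traffic independence'' of independent permutation-invariant matrices proven in \cite{MAL12}; hypothesis 3 is tailored precisely to extend that statement to our hybrid situation, where random and deterministic edges coexist on the same colored graph and one must exchange limits with the unbounded sums over vertex labelings. Once this factorization is established, hypotheses 1 and 2 do the rest, exactly as in the proof of Proposition \ref{theo: esd}.
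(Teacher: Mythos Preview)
Your overall strategy coincides with the paper's: both invoke the traffic-probability framework of \cite{MAL12,MAL122}. The paper, however, simply \emph{cites} the relevant results --- \cite[Theorem~2.3]{MAL122} to pass from hypotheses~1--3 to asymptotic traffic-freeness (under a stronger assumption on $\mbf Y_N$), then \cite{MAL12} to upgrade traffic-freeness to Voiculescu freeness when the limiting injective trace is the indicator of double trees, and finally a tightness argument to relax the traffic-convergence hypothesis on $\mbf Y_N$ to mere $^*$-convergence plus operator-norm boundedness. You instead attempt to unfold these theorems inline, and this is where gaps appear.

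The combinatorial claim in your step three is false. Take the freeness test with $k=4$, each $P_j=x$, indices $(1,2,1,2)$, and the partition $\pi=\{\{1,3\},\{2,4\}\}$ of the $4$-cycle. The quotient $T^\pi$ has two vertices joined by four edges, two of each color, so \emph{each} monochromatic subgraph $T^\pi_{(j)}$ is a single double edge --- a double tree. Thus ``alternation forces a cycle or higher multiplicity in some color, hence no $T^\pi_{(j)}$ is a double tree'' is simply wrong; moreover a multiplicity-two edge \emph{is} a double tree. Yet $\tau(s_1s_2s_1s_2)=0$ for free standard semicirculars, so the vanishing cannot come from ``no partition survives''. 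It comes from the actual traffic-independence formula, which is \emph{not} the bare product $\prod_j \tau^0[T^\pi_{(j)}]$ that your step two asserts: the factorization of a colored injective trace in \cite{MAL12} carries an extra constraint on how the monochromatic components share vertices (roughly, their nerve must be a tree), and it is this constraint that kills the above term. Your step two therefore oversimplifies the factorization, and step three draws a wrong conclusion from it. You also omit entirely the tightness argument needed to weaken the hypotheses on $\mbf Y_N$; without it the cited traffic results only apply under stronger assumptions than those stated in the lemma.
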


\begin{proof} The proof is identical to the proof of \cite[Corollary 3.9]{MAL122}. We recall briefly the idea. By \cite[Theorem 2.3]{MAL122}, the convergence of $\tau_N^0\big[T(M_N^{(j)})\big]$ for cyclic graphs $T$ and the two above assumptions implies that $M_N^{(1)} \etc M_N^{(n)}, \mbf Y_N$ satisfies a weak version of asymptotic freeness, under slighter stronger conditions on $\mbf Y_N$. In \cite{MAL12}, it is proved that when the limit of $\tau_N^0\big[T(M_N)\big]$ is the indicator of double trees as in \eqref{Eq:CVtraffic}, then this weak asymptotic freeness is actually the Voiculescu's asymptotic freeness. A tightness argument implies that $M_N^{(1)} \etc M_N^{(n)}, \mbf Y_N$ are asymptotically free in the sense of Voiculescu, with the above assumptions on $\mbf Y_N$. \end{proof}

We now show that the assumptions of Lemma \ref{Lem:DeduceAsyFree} are satisfied.
First Lemma \ref{Lem:esd} implies that the first assumption holds true. 
Let us now show how to modify the proof of the previous Section to prove the second assumption of Lemma \ref{Lem:DeduceAsyFree}. Denote the cyclic graphs by $T_i = (V_i,E_i)$ for any any $i=1\etc k$. Call $\mcal P(V_i)_{i=1\etc k}$ the set of partitions of $V_1 \sqcup \dots \sqcup V_k$ whose blocks contain at most one element of each $V_i$.
It is a matter of fact that
	\eqa\label{Eq:DiscoTrace}
		\prod_{i=1}^k \Tr^0\big[ T_i(M_N) \big] = \sum_{\sigma \in \mcal P(V_i)_{i=1\etc n}} \Tr^0 \big[ T_\sigma(M_N) \big],
	\qea
where $T_\sigma$ is the graph obtained from the disjoint union of the $T_i$'s by identifying vertices in a same block of $\sigma$. We get this formula by looking, for the injective maps in the injective traces, how their images intersect. Note that now $T_\sigma$ is possibly disconnected, but each of its connected component are cyclic.\\
Now, with no modification of the proof of \eqref{Eq:EquationTr0} in the previous part, we have: for any graph $T$ (which is considered as a $T_{\sigma}$) with $c$ connected components and $s$ simple edges,
	\eq
		\esp\Big[ \frac 1 {N^c} \Tr^0 \big[ T(M_N) \big] \Big] = \tilde K_N \times d_N^{\frac s 2} \alpha_N^{-|\bar E|} \delta_N^0\bigg[T\Big(  \frac{ A_N - m_1\alpha_N J_N}{\sqrt{1-\alpha_N}}    \Big)\bigg],
	\qe
where $\tilde K_N = N^{|V|-c-|\bar E|} d_N^{|\bar E|-\frac{|E|}2 - \frac s 2}$. By Lemma \ref{Lem:VEC}, the quantity $\tilde K_N$ is one if $T$ is a fat forest of double trees. Otherwise, since each component is cyclic, $\tilde K_N = O(d_N^{-1})$ with the same reasoning as in the previous part. The rest of the reasoning is as in the previous Section with no modification. Thus, in \eqref{Eq:DiscoTrace} the only term which is non negligible corresponds to  $\sigma$ being the trivial partition, and then
	$$\esp \big[\prod_{i=1}^k \frac 1 N \Tr^0 \big[T_i(M_N^{(j)})\big]  \big]\limN \one_{(\trm{all the } T_i\trm{'s are double trees})}.$$
Let us now indicate where we modify this proof in order to get the third assumption of Lemma \ref{Lem:DeduceAsyFree}. Remove in the above computations the fact that the $T_i$ are cyclic graphs. Hence the only detail that changes is now $\tilde K_N$ is $O(\sqrt{d_N}^{-1})$ if $T_{\sigma}$ is not a fat forest of double trees. This yields the result.
\section{Proof of Proposition 2.3} \label{Sec: prooflemma}

\par {\it Step 1: Preliminaries.} Let $G_{N}(=G_{N,d_N})$ be a uniform $d_N$-regular graphs on $[N]:=\{1\etc N\}$. Denote $\alpha_N=\frac{d_N}{N-1}$ and fix for the rest of the Section a simple graph $T=(V,E)$ with pairwise distinct edges $e_1, e_2, \etc e_n$. Let give an estimate for $\eps_N(T)$ defined below
	\eqa\label{Eq:Purpose1}
		\mathfrak C_N(T) \, \big( = \mathfrak C_{N,d_N}(T) \big)= \esp\Big[ \prod_{i=1}^n \big( G_N(e_i) - \alpha_N \big) \Big]=:\alpha_N^n \times \eps_N(T).
	\qea
Note that by considering the complementary graph $\mathfrak C_{N,d_N}(T) = (-1)^n\mathfrak C_{N,N-1-d_N}(T) $, so we can assume $\alpha_N\leq \frac 1 2$ without loss of generality.
\\
\par We use the combinatorial method, to expand the expectation with respect to the uniform measure on the set of all $d_N$-regular graphs. 
We give a first expression \eqref{Eq:Prelim} of the correlation function $\mathfrak C_N(T)$ and then explain the general idea of the proof. Denote by $\mcal G_N$ the set of all simple $d_N$-regular graphs on $[N]$.
We partition the graphs $G\in \mcal G_N$ by considering the set of edges of $T$ that belong to $G$. 
Given a subset $J\subset [n]:=\{1\etc n\}$, we denote by $\mcal G_N(J)$ the set of graphs $G \in \mcal G_N$ such that $e_j \subset G$ iff $j \in J$. 
For any $j=1\etc n$, if $e_j \subset G$ then $G(e_j) - \alpha_N = \frac{ N-1 - d_N}{N-1}$ and otherwise $G(e_j) - \alpha_N = \frac{ - d_N}{N-1}$. Hence we can write
	\eqa \label{Eq:CNT}
		\mathfrak C_N(T) =\sum_{J \subset [n]} (-1)^{n-|J|} \frac{ \sharp \mcal G_N(J) \times (N-1-d_N)^{|J|} d_N^{n - |J|}  }{\sharp \mcal G_N(J) \times (N-1)^n }.
	\qea

Remark that the quantity $ N-1 - d_N$ and $d_N$ respectively count the number of non-neighbors and neighbors $w'$ of a vertex $v$ in $G_N$. For each $j=1, \ldots,n$, disregarding coincidences, we write $$e_j=\{v_j,w_j\}$$ where $v_j,w_j$ are (fixed) vertices. Using this notation, we need to be able to distinguish the vertices $v_j$ and $w_j$, e.g. by orienting the edges $e_j$.
We now use the notations
	$$  v\sim_J w  \Leftrightarrow \big( v \sim w \trm{ if } j\in J \trm{ and } v \not\sim w \trm{ if } j\notin J \big),$$
	$$  v\not \sim_J w  \Leftrightarrow \big( v \not \sim w \trm{ if } j\in J \trm{ and } v \sim w \trm{ if } j\notin J \big).$$
We use the notation $\mcal G_N(J) \cup (w_j' \, | \,  w_j' \not\sim_J v_j )_{j\in [n]}$ for the set of graphs with labeled vertices $(G,w_j', j \in [n])$, where
\begin{enumerate}
	\item $G\in \mcal G_N(J)$, i.e. it is a $d_N$ regular graph and $v_j \sim_J w_j$ for any $j=1\etc n$ (the vertices $v_j$ and $w_j$ are fixed),
	\item $w_j' \in [N]$ is a vertex, which varies in the enumeration, such that $w_j' \not \sim_J v_j$.
\end{enumerate}
Starting with the graph $T$, we can write $\mathfrak C_N(T)$ as the signed sum 
	\eqa\label{Eq:Prelim}
		\mathfrak C_N(T) =\sum_{J \subset [n]} (-1)^{n-|J|} \frac{ \mcal G_N(J) \cup (w_j' \, | \,  w_j'\sim_J v_j)_{j\in [n]} }{\sharp \mcal  G_N   \times (N-1)^n   }.
	\qea
Indeed for any $j\in J$ (resp. $j \notin J$) we choose a vertex which is not (resp. is) a neighbor of $v_j.$	
The basic idea is as follows: fix some subset $J$. Consider now an index $j\in J$ so that $(v_j,w_j) \in G$ but $(v_j, w'_j)\notin G.$ Roughly speaking, switching these two edges and completing into a $d_N$-regular graph will result into 
almost the same number of graphs but with $J \to J\setminus\{j\}$. Therefore some cancellations will arise. However we need to be very precise if we want to stack up the different error terms for each $j\in J$. The plan is to pursue further this construction for each index $i\in J$ and build hexagons as in Figure \ref{Fig:5} below. There, $T$ is the square with four edges $e_1e_2e_3e_4$ and $J=\{1,2\}$. We anticipate in the figure below some notations defined later. We use a symmetry of the uniform regular graphs described here after to catch the simplifications in the sum \eqref{Eq:Prelim}.

  \begin{figure}[h!]
    \begin{center}\vspace*{-0.5cm}
     \includegraphics[width=110mm]{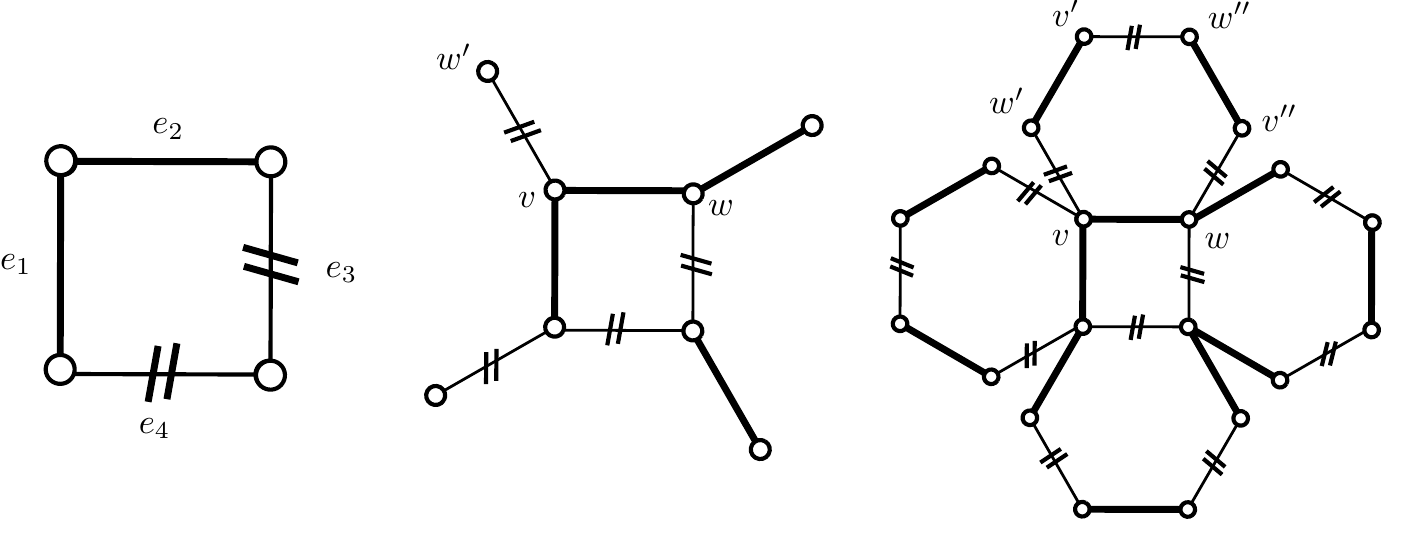}
    \end{center}
    \caption{Construction of the hexagons (the plan).}
    \label{Fig:5}
  \end{figure}

~
\\
\\ {\it The switching method:} Let $v^{(1)},w^{(1)} \etc v^{(K)},w^{(K)}$ be $2K$ distinct vertices. It is convenient to define the notation $w^{(K+1)}:=w^{(1)}$ and $v^{(K+1)}:=v^{(1)}.$ We also define the $2K$-gone $P$ obtained by drawing the edges 
$\{v^{(k)}, w^{(k)}\}$ and $\{w^{(k)}, v^{(k+1)}\}$, for all $k=1\etc K$.
Consider two disjoint sets of edges $\mcal E$ and $\mcal F$ that do not contains edges of the $2K$-gone $P$. Then, the number of $d_N$-regular graphs $G$ such that 
	\eqa\label{Switch1}
		v^{(k)}\sim w^{(k)}, \ w^{(k)} \not\sim v^{(k+1)} \ \forall k=1\etc K, \trm{ and } \ e\subset G \ \forall e \in \mcal E  \trm{ , } f\not \subset G \ \forall f \in \mcal F
	\qea
is equal to the number of the number of $d_N$-regular graphs $G$ such that 
	\eqa\label{Switch2}
		v^{(k)}\not\sim w^{(k)}, \ w^{(k)}  \sim v^{(k+1)} \ \forall k=1\etc K, \trm{ and } \ e\subset G \ \forall e \in \mcal E  \trm{ , } f\not \subset G \ \forall f \in \mcal F
	\qea
Indeed, denote by $\mcal G^{(1)}$ and $\mcal G^{(2)}$ the sets of regular graphs defined by \eqref{Switch1} and \eqref{Switch2} respectively. Consider a graph from $\mcal G^{(1)}$. By ''switching" its edges, we mean removing the edges $\{v^{(k)},w^{(k)}\}$'s and adding the $\{w^{(k)},v^{(k+1)}\}$'s. This switching maps bijectively the graph to an element from $\mcal G^{(2)}$. The important fact is that we do not modify the degree of any vertices when applying this switching.  Moreover, as one can easily verify, one can add additional assumptions on the sets $\mcal G^{(1)}$ and $\mcal G^{(2)}$ that are preserved by switching. We may use this property later.

  \begin{figure}[h!]
    \begin{center}
     \includegraphics[width=110mm]{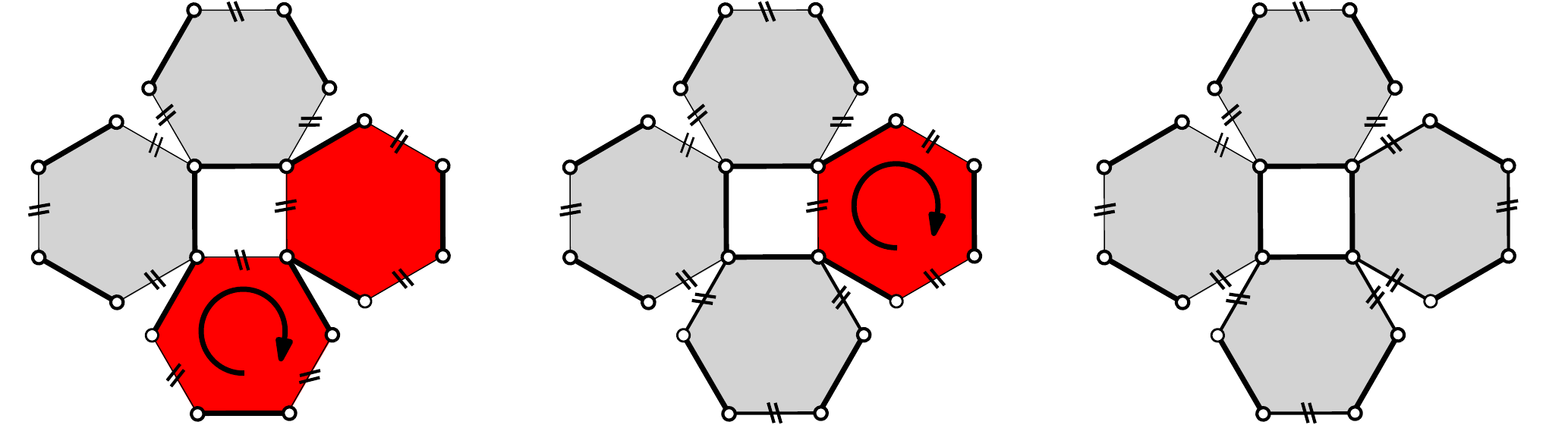}
    \end{center}
    \caption{Switching of the hexagons.}
    \label{Fig:8}
  \end{figure}

%
%
%
We intend to obtain simplifications using successive switchings for $j=1\etc n$ and using the formula
	$$ \sum_{J\subset [n]} (-1)^{n-|J|} \prod_{j\in J} \Big( \frac{d_N}N\big) \big( 1+O_j(d_N^{-\frac12})\big) = \Big( \frac{d_N}N\Big)^n \times O(d_N^{-\frac n2}),$$
valid when the $O_j(\, \cdot \,)$ depends on $j\in [n]$ but not in $J$.\\
\\ \noindent {\it Step 2: Construction of hexagons.}  We consider now the graphs of $\mcal G_N(J) \cup (w_j' \, | \,  w_j'\not \sim_J v_j)_{j\in [n]} $, to which we add labels by $2n$ more vertices $v_j'$ and $v_j''$, $j=1\etc n$:
	$$ \mcal A_N(J) = \mcal G_N(J) \cup (w_j' \, | \,  w_j'\not \sim_J v_j)_{j\in [n]} \cup (v_j' \, | \,  v_j'\sim_J w'_j)_{j\in [n]} \cup (v_j'' \, | \,  v_j'' \not \sim_J w_j)_{j\in [n]}.$$
In other words we consider the set of labeled graphs $(G,w_j', v_j', v_j'', j\in [n]),$ where
\begin{enumerate}
	\item [A1:] $G \in \mcal G_N$ is a $d_N$-regular graph on $[N]$ and $w_j', v_j', v_j'' \in [N]$ are vertices.
	\item [A2:] Four edges of each hexagon at each $j$ are constructed: 
		$$ v_j \sim_J w_j, \  w_j'\not \sim_J v_j, \ v_j'\sim_J w'_j, \ v_j'' \not \sim_J w_j.$$
\end{enumerate}
Note that the labels $v_j,w_j$ are determined by $T$. On the contrary, the $w_j', v_j', v_j''$ vary in the above enumeration. Choosing $v_j'$ and $v_j''$, we have a total of $(N-1-d_N)^n d_N^n$ possibilities (independently of the value of $J$).

Hence, one has
	\eqa
	 \mathfrak C_N(T)  := \sum_{J \subset [n]}(-1)^{n-|J|} \frac{  \sharp \mcal A_N(J)}{ \sharp G_N \times d_N^n (N-1-d_N)^n  (N-1)^n }.
		\label{Eq:Proof:01}
	\qea
\par In this definition, some of the vertices $w_j', v_j'$ or $v_j''$ can be equal and/or coincide with vertices $v_i,w_i$ for $i \in [n]$. This may be a problem for using the switching method later. Given a subset $\tilde J$ of $[n]$, we denote by $\mcal A_N(J, \tilde J)$ the set of $(G,w_j', v_j', v_j'', j\in [n]) \in \mcal A_N(J)$ such that:
	\begin{enumerate}
		\item [A3:] $j \in \tilde J$ if and only if the vertices $v_j,w_j, w_j', v_j', v_j''$ are pairwise distinct and no vertex among $w_j', v_j', v_j'' $ belongs to $ \{v_i, w_i, w_i', v_i', v_i'' \}_{i \neq j}$.
	\end{enumerate}
In other words, the set $\tilde J$ denotes the set of indices for which the vertices $ w_j', v_j', v_j''$ arise only once as labels (and do not arise as vertices of $T$). We now fix a set $\tilde J \subset [n]$ and finish the construction of the hexagons for indices in $\tilde J$ only. The contribution of the indices in $[n]\setminus \tilde J$ will be considered at the very end of our reasoning. 
 Without loss of generality, we assume that $\tilde J=\{1, \ldots, \tilde n\}$ for some $0\leq \tilde n \leq n$. Note that the choice of $J\subset [n]$ can then be decomposed as a choice for a subset of $[\tilde n]$ and of a subset of $[n]\setminus [\tilde n]$. We fix the second choice for the moment: given $J \subset [\tilde n]$, we set
	$$\mcal A_N(J, \tilde n) := \cup_{J_2 \subset{ [n]\setminus [\tilde n]}}\mcal A_N(J \cap J_2, \tilde J)$$
and prove the estimate for
	\eqa
	 \mathfrak C_N(T, \tilde n)  := \sum_{J \subset [\tilde n]}(-1)^{\tilde n-|J|} \frac{  \sharp \mcal A_N(J, \tilde n)}{d_N^n (N-1-d_N)^n (N-1)^n \sharp G_N }.
	\qea
 
\par For each graph of $\mcal A_N(J, \tilde n)$ and each $j \in [\tilde n]$, we want to add the last vertex $w_j''$ to close the associated hexagon (we do not add these vertices for indices $j\notin [\tilde n]$). This is not alway possible, given a labelled graph in $\mcal A_N(J,\tilde n)$ to add such a vertex since it may happen that $v_j'$ and $v_j''$ have $d_N$ neighbors in common. \\
Let $\hat J\subset [\tilde n]$. This partition represent the indices $j=1\etc \tilde n$ for which it will be possible to chose a vertex which is the neighbor of one but not both from $v_j'$ and $v_j''$. We only create hexagons for $j\in \hat J$.

For each ${j \in [\tilde n]}$, we consider integers $0 \leq L_j, M_j \leq 5n$ and $\ell_j \in \{ 0 \etc d_N - \max(L_j,M_j)\}$. These integers will count in the following the number of non allowed choices at each step, when choosing vertices $w_j''$, $j\in \hat J$. The order of the steps will mater, since the number of allowed choices evolves along the construction. In short we denote $\mbf L=(L_j)_{j \in [\tilde n]}$, $\mbf M = (M_j)_{j \in [\tilde n]}$ and $\boldsymbol \ell = (\ell_j)_{j \in [\tilde n]}$. Let $\mcal B_N(J,\tilde n,\hat J, \mbf L,\mbf M,\boldsymbol\ell)$ be the set of all $(G,w_j', v_j', v_j'', w_i'' j \in [n], i \in \hat J)$ such that 
\begin{enumerate}
	\item [B1:] $(G,w_j', v_j', v_j'', j \in [n])\in \mcal A_N(J, \tilde n)$, $w_j''\in [N]$ are vertices for any $j \in \hat J$.
	\item [B2:] The two remaining edges of each hexagon are chosen: $\forall j \in \hat J$,  
	$w_j''  \not \sim_Jv'_j,  w_j'' \sim_Jv''_j, $
	\item [B3:] The construction is injective: $\forall j \in \hat J$, the $w_j'' $ are pairwise distinct and do not belong to $ \{v_i, w_i, w_i', v_i', v_i'' \}_{i \in [n]}$.
	\item [B4:] Some parameters are fixed and finite: $\forall j \leq \tilde n$, $v_j'$ has exactly $L_j$ neighbors in $\{v_i,w_i, v_i', w_i',w_i''\}_{i \in [n]} \sqcup \{w_i''\}_{i<j, i\in \hat J} \setminus \{w_j'\}$. 
	\item [B5:] $\forall j  \leq \tilde n$, $v_j''$ has exactly $M_j$ neighbors in $\{v_i,w_i, v_i', w_i',w_i''\}_{i \in [n]} \sqcup \{w''_i\}_{i<j, i\in \hat J} \setminus \{w_j\}$. 
	\item [B6:] Some parameters are fixed and possibly large: $\forall j \leq \tilde n$, $v_j'$ and $v_j''$ have exactly $\ell_j$ neighbors in common out of $\{v_i,w_i, v_i', w_i',w_i''\}_{i\in[n]} \sqcup \{w_i''\}_{i<j, i\in \hat J}$.
	\item [B7:] With boundary terms: $\forall j \leq \tilde n$ such that $j\not \in \hat J$, $\ell_j = d_N-\max(L_j,M_j)$. 
\end{enumerate}
See Figure \ref{Fig:Schema1} for a scheme of this labelled graph.

  \begin{figure}[h!]
    \begin{center}
     \includegraphics[width=90mm]{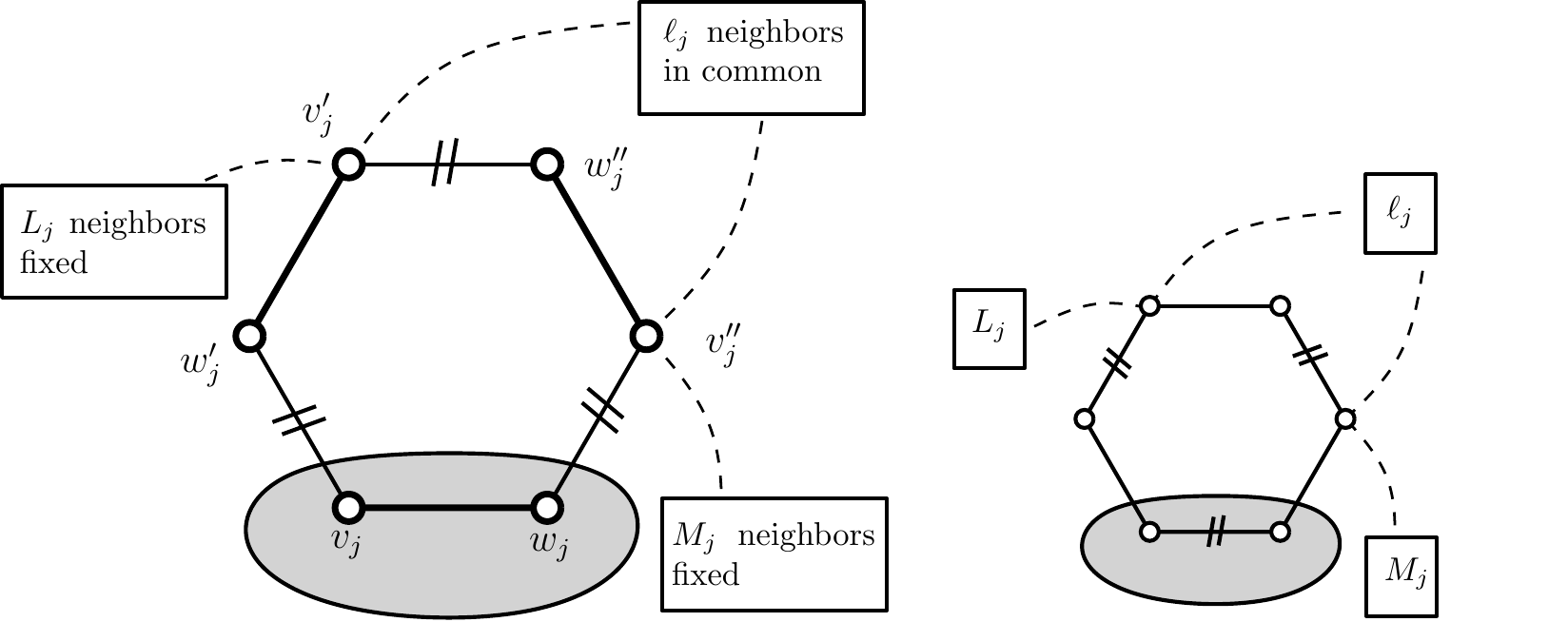}
    \end{center}
    \caption{Construction of the hexagons: detail at $j\in \hat J$ of an element of $\mcal B_N(J,\tilde n,\hat J, \mbf L,\mbf M,\boldsymbol\ell)$. This left picture represent the case where $j \in J$, the right one when $j\not \in J$.}
    \label{Fig:Schema1}
  \end{figure}

  \begin{figure}[h!]
    \begin{center}
     \includegraphics[width=110mm]{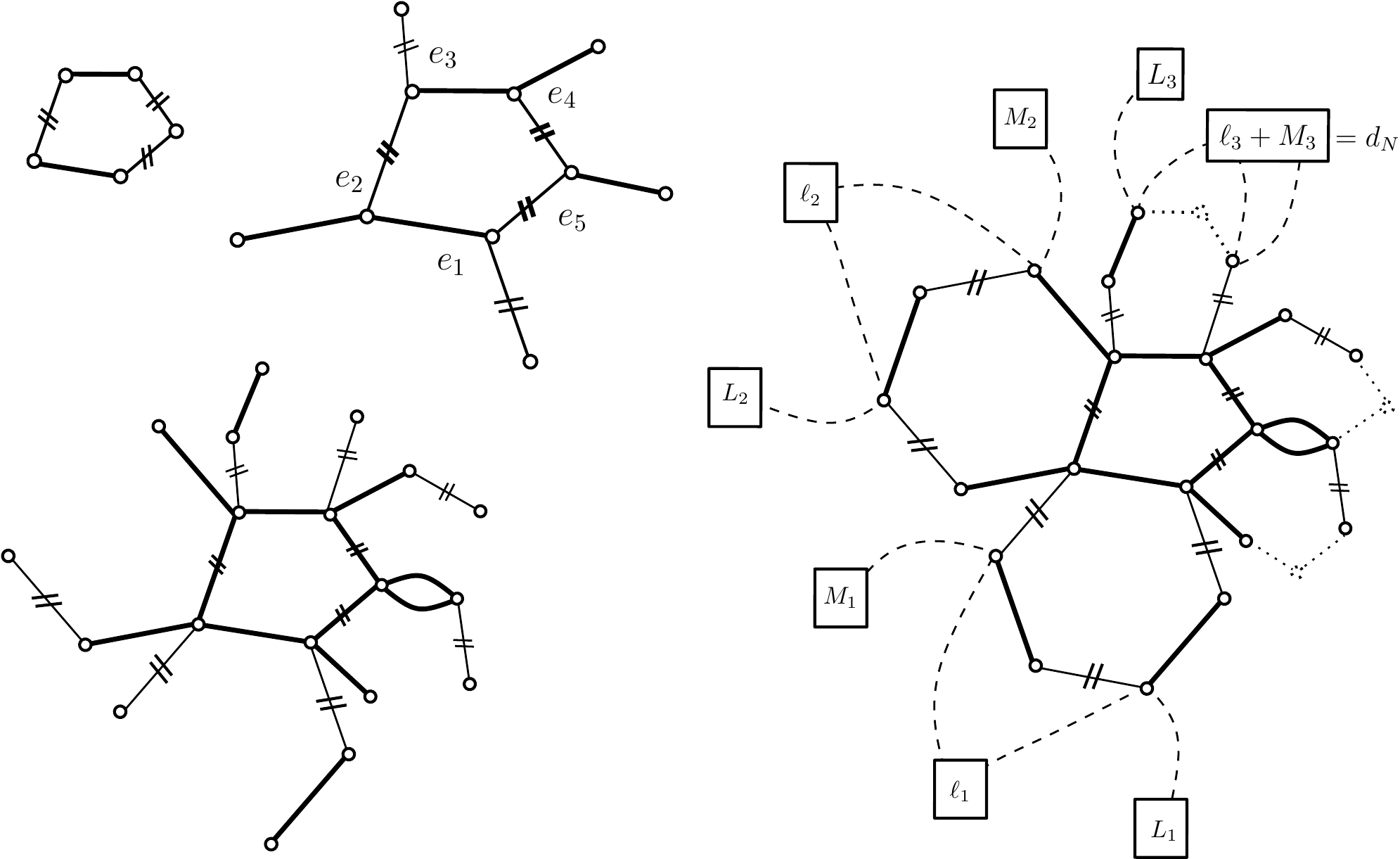}
    \end{center}
    \caption{Construction of the hexagons: an element of $\mcal B_N(J,\tilde n,\hat J, \mbf L,\mbf M,\boldsymbol\ell)$, $n=5$, $J=\{1,3\}$, $\tilde n=3$, $\hat J=\{1,2\}$.}
    \label{Fig:Schema7}
  \end{figure}

~
We now give a formula for $\sharp  \mcal A_N(J, \tilde n)$ in terms of the $\sharp \mcal B_N(J,\tilde n,\hat J, \mbf L,\mbf M,\boldsymbol\ell)$'s. Start with a graph $G$ in $\mcal A_N(J, \tilde n)$, for which the vertices $w_j', v_j'$ and $v_j"$ are chosen. Assume $1 \in \hat J$, so that we need to add the vertex $w_1''$. We partition $ \mcal A_N(J, \tilde n)$ into subsets according to the value of the constants $L_1,M_1$ and $\ell_1$ given by B4, B5 and B6. If $1\in J$, we chose $w_1''$ as a neighbor of $v_1''$ (a priori $d_N$ choices), with the restrictions that $w_1''$ is not in the vertices previously considered (remove $M_1$ choices) and is not a neighbor of $v'_1$ (remove $\ell_1$ choices). If $1\notin J$, we chose similarly $v_1''$, now among the neighbors of $v_1'$, with the same restrictions, then replacing the constant $M_1$ by $L_1$. There are either $(d_N-M_1-\ell_1)$ or $(d_N-L_1-\ell_1)$ possible choices depending on the two above cases ($1\in J$ or not). Now, we use the same procedure in order to choose the vertex $w_2''$ in $2\in \hat J$, decomposing again our set of graphs thanks to $L_2,M_2$ and $\ell_2$. If at some point $j\not \in \hat J$ we do not choose $w_j''$ and look at $j+1$ continue this process up to $\tilde n$ times. This gives
	\eqa\label{Eq:Formule B}
		\sharp \mcal A_N(J, \tilde n) & = & \sum_{\hat J \subset [\tilde n]} \sum_{\mbf L, \mbf M} \sum_{\boldsymbol \ell} \frac{ \sharp \mcal B_N(J,\tilde n,\hat J,\mbf L,\mbf M,\boldsymbol\ell) }{\kappa_N(J) } .
	\qea
where
		$$\kappa_N (J)= \prod_{ j \in   \hat J \cap J} \big(d_N - L_j - \ell_j \big)   \times \prod_{ j \in \hat J \setminus J} \big(d_N- M_j - \ell_j \big).$$

We assume without loss of generality $\hat J=\{1\etc \hat n\}$ for $1 \leq \hat n\leq \tilde n$. As before, we fix the choice of the indices of $J$ that are not in $[\hat n]$ for the moment. Given $J\subset[\hat n]$, we set now 
	$$\mcal B_N(J,\tilde n,\hat n, \mbf L,\mbf M,\boldsymbol\ell) := \cup_{J_2 \subset{ [\tilde n]\setminus \hat J}}\mcal B_N(J \cup J_2,\tilde n,\hat J, \mbf L,\mbf M,\boldsymbol\ell)$$ 
(we do not care about of the configuration at index $\hat n < j< \tilde n$) and  define
\eqa\label{Eq:Formula BB}
	 \mathfrak C_N(T, \tilde n, \hat n, \mbf L, \mbf M)  := \sum_{\ell_j, \, j \in \hat J} \  \sum_{J \subset [\hat n]}(-1)^{\hat n-|J|} \frac{  \sharp\mcal B_N(J,\tilde n,\hat n, \mbf L,\mbf M,\boldsymbol\ell)}{ \kappa_N(J) \times \sharp \mcal G_N\times d_N^n (N-1-d_N)^n(N-1)^n  }  
	\qea
where the first sum is over all $\ell_j=j\etc d_N-\max(L_j,M_j)-1$ for $j\leq \hat n$ (for $\hat n +1 \leq j < \tilde n$, there is the restriction $\ell_j=d_N-\max(L_j,M_j)$). Since there is a finite number of choices for $\hat n, \mbf L$ and $\mbf M$, it is sufficient to prove estimate \eqref{Eq:Purpose1} for that quantity.

~\\
\\\noindent{\it Step 3: first switchings.} The switching method rotates the hexagon formed by the vertices $\{ v_j, w_j, v_j', w_j', v_j", w_j"\}$ by one unit (edge) clockwise. Applied for each $j\in \hat J$, it turns a graph not containing $e_j=(v_j, w_j)$ into a graph that contains $e_j$ (changing the rightmost into the leftmost configurations in Figure \ref{Fig:Schema1}). It is important to note that the constants $\mbf L, \mbf M, \boldsymbol \ell$ remain the same after switching. This follows from the definition of the $L_j,M_j, \ell_j$'s.
It follows that the cardinal of $ \mcal B_N(J,\tilde n, \hat n,\mbf L,\mbf M,\boldsymbol\ell)$ does actually not depend on $J$ and is equal to the one of
	$$ \mcal B_{0,N}(\tilde n, \hat n, \mbf L,\mbf M,\boldsymbol\ell):=\mcal B_N([\hat n],\tilde n, \hat n, \mbf L,\mbf M,\boldsymbol\ell)$$
 for which each $j=1\etc \tilde n$ is in the leftmost configuration on Figure \ref{Fig:Schema1}.  Moreover, 
	$$ \sum_{J \subset [\hat n]}(-1)^{\hat n-|J|} \frac{ 1}{ \kappa_N(J) } = \prod_{j=1}^{\hat n}\bigg(   \frac 1 {d_N-L_j-\ell_j} - \frac 1 {d_N-M_j-\ell_j} \bigg) 	= \prod_{j \in [\hat n]}O\Big((d_N - \ell_j)^{-2} \Big),$$
which yields
	\eqa\label{Eq:FinStep1}
		\big| \mathfrak C_N( T, \tilde n, \hat n, \mbf L, \mbf M) \big| \leq \sum_{\ell_1\etc \ell_{\hat n}} \prod_{j=1}^{\hat n}O\Big((d_N - \ell_j)^{-2} \Big)\frac{  \sharp\mcal B_{0,N}(J,\tilde n, \hat n,\mbf L,\mbf M,\boldsymbol\ell)}{ d_N^n (N-1-d_N)^n(N-1)^n \sharp G_N }.
	\qea
	
The conclusion of our construction so far is that we have gained a factor $(d_N - \ell_j)^{-1}$ for each $j\in [\tilde n]$, compared with the crude estimate where we forget the condition than $w_j'' \not \sim v_j'$ and count the number of choices for $w_j''$ (see Step 4 below). We first show that we get the desired factor $d_N^{-\frac 1 2}$ provided $\ell_j$ is not to close to $d_N$. We fix a constant $0<\eta'<\eta$, where $\eta$ has been chosen in the assumption of Theorem \ref{Prop:eps_N}. 
We thus first consider the case where  $\ell_j\leq k_N:=d_N-\eta' \sqrt{d_N}$. 
 In a second time, we somehow prove that when two given vertices have a large number of neighbors in common we get an exponentially small factor (the same argument applies for indices $\hat n< j < \tilde n$).
\par We fix $J \subset [\hat n]$ and the values of $M_j,L_j, \ell_j$ for $j\leq\tilde n$. We first show how to suppress all the constructions for $j\leq \hat n$ in Steps 5 and 6 below, proving the

\begin{Lem}\label{Lem:Graph1}
		$$  \mathfrak C_N(\tilde n, \hat n,\mbf L,\mbf M)
		\leq O\big(d_N^{-\frac{\hat n }2}\big) \times \sharp \mcal C_N(\tilde n, \hat n,\mbf L,\mbf M),$$
where $\mcal C_N(\tilde n, \hat n,\mbf L,\mbf M)$ is the set of labelled graphs $(G,w_j', v_j', v_j'', \hat n < j \leq n)$ as in $
\mcal B_N(\tilde n, \hat n,\mbf L,\mbf M)$ but where do not perform the constructions of labels for indices $j\leq \hat n$. 
\end{Lem}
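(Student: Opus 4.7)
My plan is to suppress, one index at a time, the four additional labels $(w_j', v_j', v_j'', w_j'')$ for $j\in [\hat n]$ from the count $\sharp \mcal B_{0,N}(\tilde n, \hat n, \mbf L, \mbf M, \boldsymbol\ell)$ appearing in the bound \eqref{Eq:FinStep1}. I aim to show that for each $j\in[\hat n]$ the weighted contribution of these four labels is at most $O(d_N^{-1/2})$ times the per-$j$ portion $d_N(N-1-d_N)(N-1)$ of the denominator in \eqref{Eq:Formula BB}; multiplying over $j\in [\hat n]$ and repackaging the unsummed labels (for $\hat n< j\leq n$) into $\sharp\mcal C_N(\tilde n,\hat n,\mbf L,\mbf M)$ then yields the announced inequality.

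Fix $j\in [\hat n]$ and condition on the graph $G$ and on all other labels. Given $(w_j', v_j', v_j'')$, the number of admissible $w_j''$ is at most $d_N-M_j-\ell_j\leq d_N-\ell_j$, so pairing it with the switching weight $(d_N-\ell_j)^{-2}$ leaves an effective weight $(d_N-\ell_j)^{-1}$ to balance against the sum over $v_j''$. I split this sum at the threshold $k_N:=d_N-\eta'\sqrt{d_N}$ for some fixed $0<\eta'<\eta$. In the main regime $\ell_j\leq k_N$, one has $(d_N-\ell_j)^{-1}\leq (\eta')^{-1}d_N^{-1/2}$ and the count of admissible $v_j''$ is trivially bounded by $N$, yielding a contribution of order $Nd_N^{-1/2}$. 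Summing over $(w_j',v_j')$, which contributes at most $N\cdot d_N$ pairs, gives a total weighted count $O(N^2d_N^{1/2})$ for the four labels. Dividing by the per-$j$ normalization $d_N(N-1-d_N)(N-1)\sim d_N N^2$ produces the claimed factor $O(d_N^{-1/2})$.

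I expect the main obstacle to be the tail contribution $\ell_j>k_N$, where the pair $(v_j',v_j'')$ has an anomalously large number of common neighbors. There the weight $(d_N-\ell_j)^{-1}$ is only bounded by $d_N$, so the number of such pairs must be much smaller than $Nd_N^{-3/2}$ to remain negligible; this is precisely the content of Step 6, and the place where the assumption $\bigl|\tfrac{N}{2}-d_N-\eta\sqrt{d_N}\bigr|\to\infty$ enters the argument. Indeed, the expected number of common neighbors of two fixed vertices in $G_{N,d_N}$ concentrates around $d_N^2/(N-1)$, and the hypothesis is precisely what guarantees $d_N-d_N^2/(N-1)\gtrsim \sqrt{d_N}$ (with the symmetric statement obtained, when $d_N\geq N/2$, via the complement graph, as noted in Step 1), so that $\{\ell_j>k_N\}$ becomes a genuine large deviation event. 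I plan to control it by a further switching-type argument comparing configurations with many common neighbors to configurations with fewer, which should yield an exponential suppression in $\sqrt{d_N}$ that amply dominates the remaining $d_N$ factor. Assembling the per-index gains $O(d_N^{-1/2})$ over $j\in[\hat n]$ produces the prefactor $O(d_N^{-\hat n/2})$, while the labels at indices $\hat n<j\leq n$, untouched by the above reduction, reconstitute precisely $\sharp\mcal C_N(\tilde n,\hat n,\mbf L,\mbf M)$, completing the proof of the lemma.
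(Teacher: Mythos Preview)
Your proposal is correct and follows essentially the same approach as the paper: split the sum over $\ell_j$ at the threshold $k_N = d_N - \eta'\sqrt{d_N}$, handle the main regime $\ell_j\leq k_N$ by direct counting of the labels $(w_j', v_j', v_j'', w_j'')$ to extract a factor $O(d_N^{-1/2})$ per index, and treat the tail $\ell_j > k_N$ by a switching argument comparing $\sharp\mcal B_N(\,\cdot\,,\ell)$ with $\sharp\mcal B_N(\,\cdot\,,\ell-1)$ to obtain exponential decay $\rho^{(\eta-\eta')\sqrt{d_N}}$. The tail analysis you allude to is the paper's Step~5 (not Step~6), and your heuristic reading of the hypothesis $\bigl|\tfrac{N}{2}-d_N-\eta\sqrt{d_N}\bigr|\to\infty$ matches exactly how the paper invokes $N-2d_N-\eta\sqrt{d_N}\to\infty$ to force the comparison ratio below~$1$.
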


In steps 6 and 7 respectively, we investigate the cases where $\hat n < j< \tilde n $ and $\tilde n < j \leq n$ respectively, proving
\begin{Lem}\label{Lem:Graph2}
	$$ \frac{\mcal C_N(\tilde n, \hat n,\mbf L,\mbf M,\boldsymbol\ell)}{ \big(d_N (N-1-d_N)(N-1)\big)^{n-\hat n}}=  \sum_{J \subset [n]\setminus [\hat n]}  O\big(d_N^{-\frac{(n-\hat n )}2}\big) \times \frac{\sharp  \mcal G_N(J) \times \big( \frac {d_N}N\big)^{|J|}}{\sharp \mcal G_N}$$
\end{Lem}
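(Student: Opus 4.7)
To prove Lemma \ref{Lem:Graph2}, I would treat the indices $\hat n < j \leq n$ in two separate ranges: the \emph{boundary} indices $\hat n < j \leq \tilde n$, for which condition B7 forces $\ell_j = d_N - \max(L_j, M_j)$ (so $v_j'$ and $v_j''$ share nearly all of their $d_N$ neighbors outside the finite set of previously chosen labels), and the \emph{coincidence} indices $\tilde n < j \leq n$, for which by A3 at least one of the labels $w_j', v_j', v_j''$ either coincides with another of them or with a label of a different index. In both ranges the generic count of choices of $(w_j', v_j', v_j'')$ is of order $d_N(N-1-d_N)(N-1)$, and it suffices to extract a factor $O(d_N^{-1/2})$ per index beyond what this generic count provides; after summing out these labels, the remaining enumeration is over $d_N$-regular graphs $G$ with prescribed inclusion pattern $J \subset [n]\setminus [\hat n]$ for the edges $e_j$, $j > \hat n$, which gives the factor $\sharp \mcal G_N(J)$.

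For the boundary range I would apply a secondary switching argument on the common neighbors of $v_j'$ and $v_j''$. Given a $d_N$-regular graph with $\ell$ common neighbors of these two vertices, one can remove an edge between $v_j'$ and a common neighbor $x$ and insert a replacement edge between $v_j'$ and a non-neighbor of both $v_j'$ and $v_j''$ (regularity is restored by a compensating double switch). The combinatorial ratio of graphs with $\ell$ versus $\ell - 1$ common neighbors is approximately $\tfrac{\ell (N - 2d_N + \ell)}{(d_N - \ell)(d_N + O(1))}$. The hypothesis $|\tfrac N 2 - d_N - \eta \sqrt{d_N}| \to \infty$, combined with the reduction to $\alpha_N \leq \tfrac 12$ via the complementary graph, ensures that this ratio stays below some constant $r < 1$ uniformly as $\ell$ decreases from $d_N$ to $d_N - C\sqrt{d_N}$. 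Iterating over $\Omega(\sqrt{d_N})$ such steps produces an exponentially small factor $r^{\Omega(\sqrt{d_N})}$, which easily absorbs the required $d_N^{-1/2}$ per boundary index.

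For the coincidence range I would enumerate the configurations at each $j$ by the type of coincidence among $\{v_j, w_j, w_j', v_j', v_j''\}_{j > \hat n}$. Each coincidence forces one of the three free labels $w_j', v_j', v_j''$ to take a value in a set of bounded size, losing a multiplicative factor of order $\min(d_N, N-1-d_N)$ in the label count relative to the generic $d_N(N-1-d_N)(N-1)$. Since $d_N \to \infty$ and $N - d_N \to \infty$, this yields the $O(d_N^{-1/2})$ factor (in fact the stronger $O(d_N^{-1})$ under $\alpha_N \leq \tfrac 12$). The factor $(d_N/N)^{|J|}$ on the right-hand side then emerges naturally when summing over the inclusion pattern $J$: for $j \in J$ the label $v_j'$ must be a neighbor of $w_j'$ (contributing $d_N$ choices) rather than a non-neighbor (contributing $N-1-d_N$), and after normalization by the generic count this supplies an extra $d_N/(N-1-d_N) \asymp d_N/N$ per $j \in J$.

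The main obstacle will be the boundary range: setting up the secondary switching so that the ratio $r_N < 1$ can be quantified uniformly across the allowed range of $d_N$. The assumption $|\tfrac N 2 - d_N - \eta\sqrt{d_N}| \to \infty$ is precisely what prevents $r_N$ from approaching $1$ as $d_N$ approaches $N/2$; without this margin, the switching argument degenerates and the claimed exponential decay fails. The coincidence range, while combinatorially involved, is comparatively routine once one enumerates the coincidence patterns carefully enough to avoid double-counting overlapping identifications.
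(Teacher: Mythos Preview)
Your approach mirrors the paper's: the boundary indices $\hat n < j \leq \tilde n$ (where B7 forces $\ell_j = d_N - \max(L_j,M_j)$) are controlled by exactly the secondary switching the paper sets up in its Step~5 and invokes again in Step~6, and the coincidence indices $\tilde n < j \leq n$ are handled in the paper's Step~7 by the same identification--pattern count you describe.

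Two details to tighten. First, your displayed ratio $\tfrac{\ell(N-2d_N+\ell)}{(d_N-\ell)(d_N+O(1))}$ blows up rather than staying below $1$ as $\ell \to d_N$; what is actually bounded by some $\rho<1$ is essentially its reciprocal, of order $\tfrac{(d_N-\ell)^2}{\ell\,(N-2d_N)}$, and the hypothesis that $N-2d_N-2\eta\sqrt{d_N}\to\infty$ (after the complementary-graph reduction) is precisely what keeps this below $1$ on the range $\ell \geq d_N - \eta\sqrt{d_N}$. Second, in the coincidence range a single identification can serve several indices $j$ at once, so ``one lost factor per coincidence'' is not the same as ``one lost factor per index''; the paper handles this by grouping the $m_j$ coinciding labels, noting that the generic count $d_N^{p_j}(N-d_N)^{q_j}$ collapses to $O(d_N)$ or $O(N)$, and taking the $m_j$-th root so that each participating index still recovers at least a $\sqrt{d_N}$ saving. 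This is the careful bookkeeping your final sentence alludes to but does not make explicit.
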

Assume Lemmas \ref{Lem:Graph1} and \ref{Lem:Graph2} hold true. Then, using the trivial bound $\sharp  \mcal G_N(J)/\sharp  \mcal G_N \leq \mbb P(    T_J \subset G_N)$ where $T_J$ is the graph formed by the edges $e_j, j\in J$ of $T$, we have
	\eqa\label{Eq:Purpose2}
		|\mathfrak C_N(T) | \leq  \sum_{\tilde T \subset T} \big( \frac {d_N}N\big)^{n} \eps_N( \tilde T) \times \mbb P(   \tilde T \subset G_N), \ \  \eps_N(   \tilde T) =  O( d_N^{-\frac n2}).
	\qea
with the sum bears over the subgraphs $\tilde T$ of $T$ with exactly $n$ edges.  With the same computation as in the proof of [ref], we deduce that $P( \tilde T \subset G_N) = \alpha_N^n \big( 1 + o(d_N^{-\frac 1 2}) \big)$ for any $ \tilde T$. As a feed-back, \eqref{Eq:Purpose2} yields $\mathfrak C_N(T) = \big( \frac {d_N}N\big)^{n} \times O( d_N^{-\frac n2})$, which is \eqref{Eq:Purpose1}.
\\
\\\noindent {\it Step 4: analysis for $\ell$ smaller that $k_N$.} For simplicity, we denote $ \mcal B_N(J,\tilde n, \hat n,\mbf L,\mbf M,\boldsymbol\ell)$ by $\mcal B_N( \, \cdot \, )$  and $\ell_{\hat n}, M_{\hat n},v_{\hat n} \dots $ by $\ell,L ,v,\dots $  in the computations below. Recall that $v''$ is a neighbor of $w'$ or (exclusively) of $w''$. Using the notation $ \mcal B_N( \, \cdot \, )\setminus (w'')$ to mean that we forget the construction of $w''$ in $\mcal B(\, \cdot \,)$, we then have 
	\eq
		\sum_{\ell  = 0}^{k_N} \frac{ \sharp \mcal B_N( \, \cdot \, )}{(d_N-\ell )^2} \leq \sum_{\ell  = 0}^{k_N} \frac{ \sharp \mcal B_N( \, \cdot \, )\setminus(w'')}{d_N-\ell }.
	\qe
Similarly, with the notation $ \mcal B_N( \, \cdot \, )\setminus(v'', \ell, M, L)$ meaning that we also forget the constraints B4, B5 and B6 for $j=\tilde n$, the following inequality 
	\eq
	\sum_{\ell  = 0}^{k_N} \frac{ \sharp \mcal B_N( \, \cdot \, )}{(d_N-\ell )^2} \leq C \frac{\sharp \mcal B_N( \, \cdot \, )\setminus(w'', \ell , M , L )}{d_N-k_N},
	\qe
is valid for a constant $C$ that bounds the number of choices for the $M$ and $L$. Forgetting now the construction of the labels $v', v'' $ and $w'$ yields
	\eqa\label{Eq:CutGraphs0}
		\sum_{\ell = 0}^{k_N} \frac{ \sharp \mcal B_N( \, \cdot \, )}{(d_N-\ell)^2 d_N (N-d_N)(N-1)} \leq C \times \frac{\sharp \mcal B_N( \, \cdot \, )\setminus(\tilde n)}{d_N-k_N},
	\qea
where the notation $ \sharp \mcal B_N( \, \cdot \, )\setminus(\tilde n)$ precisely means that we do not perform any choice of labels for the index $j=\tilde n$.  We now choose $k_N=d_N-\eta'\sqrt{d_N}$. As a consequence, we get the final estimate of this step
	\eqa
		\sum_{\ell = 0}^{k_N} \frac{ \sharp \mcal B_N( \, \cdot \, )}{(d_N-\ell)^2 d_N (N-d_N)(N-1)} \leq \frac{C}{\eta'} \times \frac{\sharp \mcal B_N( \, \cdot \, )\setminus(\tilde n)}{\sqrt{d_N}}.
	\qea

\noindent{\it Step 5: analysis for $\ell$ larger than $k_N= d_N-\eta'\sqrt{d_N}$}. 
We here show 
\begin{Lem}
	There exists $0<\rho<1$ such that
\begin{equation}\label{lgrand}
\sum_{\ell \geq k_N}\frac{ \sharp\mcal B_N(J,\tilde n, \hat n,\mbf L,\mbf M,\boldsymbol\ell)}{(d_N-\ell)^2d_N(N-d_N) (N-1)}\leq \rho^{(\eta-\eta')\sqrt{d_N}}  \sharp \mcal B(\, \cdot \, )\setminus (\tilde n),
\end{equation}
where $\eta$ is the constant of Theorem \ref{Prop:eps_N}.
\end{Lem}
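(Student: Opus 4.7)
The plan is to apply the switching method one more time, now to the common neighbors of $v'_{\hat n}$ and $v''_{\hat n}$, in order to establish exponential decay of $\sigma(\ell) := \sharp\mcal B_N(J,\tilde n,\hat n,\mbf L,\mbf M,\boldsymbol\ell)$ (with all parameters except $\ell_{\hat n} = \ell$ fixed) as $\ell$ grows beyond $k_N$. The switching goes as follows: in a graph contributing to $\sigma(\ell)$, pick a common neighbor $c$ of $v'_{\hat n}$ and $v''_{\hat n}$, a vertex $b$ non-adjacent to both, and a vertex $a$ with $a \sim b$, $a \not\sim c$, all three chosen outside the $O(n)$ labeled vertices. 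Replace $(v''_{\hat n},c)$ and $(a,b)$ by $(v''_{\hat n}, b)$ and $(a,c)$; this produces a $d_N$-regular graph contributing to $\sigma(\ell - 1)$ (constraints B1--B5 are preserved since no labeled vertex is touched, while the count $\ell_{\hat n}$ decreases by exactly one). Counting the forward switches (of order $\ell (N - 2d_N + \ell)\, d_N$ per graph) and the reverse switches (of order $(d_N - \ell + 1)^2 d_N$ per graph), with $O(n)$ corrections that are negligible since $d_N,\, N - d_N \to \infty$, a double-counting argument yields the key bound
\[
\sigma(\ell) \leq C_n\, \frac{(d_N - \ell + 1)^2}{\ell\,(N - 2d_N + \ell)}\, \sigma(\ell-1).
\]

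To simplify this, I would combine the hypothesis $|\tfrac{N}{2} - d_N - \eta\sqrt{d_N}|\to\infty$ with the WLOG reduction $d_N \leq (N-1)/2$. For $\ell\geq k_N$ one has $\ell \sim d_N$, and the hypothesis forces one of two regimes: either $N - 2d_N - 2\eta\sqrt{d_N} \to +\infty$, so that $N - 2d_N + \ell \geq d_N + (2\eta - \eta')\sqrt{d_N}$; or $N - 2d_N - 2\eta\sqrt{d_N} \to -\infty$, in which case $d_N \sim N/2$ and $N - 2d_N + \ell \geq \ell \sim d_N$. In both cases one obtains
\[
\sigma(\ell) \leq C_n'\, \frac{(d_N - \ell + 1)^2}{d_N^{2}}\, \sigma(\ell - 1).
\]
Iterating this bound down to $k_N$ and using $(d_N - j + 1)^2 \leq 2(\eta')^2 d_N$ for $j \geq k_N$ then yields
\[
\sigma(\ell) \leq \sigma(k_N - 1)\,\Big(\frac{2C_n'(\eta')^2}{d_N}\Big)^{\ell - k_N + 1},
\]
a decay which, for $d_N$ large enough, is much faster than any $\rho^{\sqrt{d_N}}$.

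Finally, after dividing by $(d_N-\ell)^2 d_N(N-d_N)(N-1)$ and summing over $\ell \in [k_N,\, d_N - \max(L_{\hat n},M_{\hat n}) - 1]$, the resulting series is geometric with ratio $O(1/d_N)$ and is dominated by its first term. Combined with the crude estimate $\sigma(k_N - 1) \leq \mathrm{const}\cdot d_N(N - d_N)(N-1)\,\sharp \mcal B(\,\cdot\,)\setminus(\tilde n)$, coming from enumerating all admissible choices of the labels $v'_{\hat n}, v''_{\hat n}, w''_{\hat n}$ at index $\hat n$, this gives the announced inequality for any fixed $\rho\in(0,1)$ and $d_N$ large enough. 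The main obstacle is the case analysis in the second paragraph: one must check carefully that the critical scale $N - 2d_N \asymp 2\eta\sqrt{d_N}$, at which the denominator $\ell(N - 2d_N + \ell)$ of the switching ratio could fail to be of order $d_N^2$, is indeed excluded by the hypothesis $|\tfrac{N}{2} - d_N - \eta\sqrt{d_N}|\to\infty$; this is exactly where the parameter $\eta$ enters the argument.
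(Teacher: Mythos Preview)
Your overall strategy coincides with the paper's: build an auxiliary square on a common neighbour of $v'_{\hat n},v''_{\hat n}$, switch to lower $\ell$ by one, derive a recursion $\sigma(\ell)\le(\text{ratio})\,\sigma(\ell-1)$, and iterate down past $k_N$ (the paper iterates to $m_N=d_N-\eta\sqrt{d_N}<k_N$). Your $c,b,a$ are exactly the paper's $s,s',t$, attached to $v''$ rather than $v'$, which is immaterial by symmetry.

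There is, however, a genuine gap in your forward-switch count. Asserting that it is ``of order $\ell(N-2d_N+\ell)\,d_N$'' per graph presumes that for each pair $(c,b)$ roughly $d_N$ neighbours $a$ of $b$ satisfy $a\not\sim c$; this can fail badly when $b$ and $c$ share many neighbours, and the honest uniform lower bound one gets by summing over all admissible $(c,b,a)$ is only $\ell\,d_N\big((N-2d_N+\ell)-d_N\big)=\ell\,d_N(N-3d_N+\ell)$, which is non-positive precisely in your Case~2. The paper circumvents this by choosing $t$ to be an \emph{arbitrary} neighbour of $s'$ (so that the triple count genuinely is $\ge(\ell-c)(N-2d_N+\ell-c)(d_N-c)$), and only afterwards splitting into the cases $t\not\sim s$ (switched, then bounded by $d_N(d_N-\ell)^2\sigma(\ell-1)$ as in your reverse count) and $t\sim s$ (bounded crudely by $d_N^2\ell\,\sigma(\ell)$ and absorbed into the left-hand side). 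After this term is moved across, the effective denominator of the recursion is again $\ell\,d_N(N-3d_N+\ell)\ge \ell\,d_N(N-2d_N-\eta\sqrt{d_N})$, and the paper then invokes the technical hypothesis $N-2d_N-\eta\sqrt{d_N}\to\infty$ to make the ratio $<1$. So the case split is not cosmetic: it is what makes the forward count rigorous, and once you insert it your Case~2 analysis is no longer available --- the argument really does need the one-sided condition on $N-2d_N$ that the paper uses.
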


To this aim, we use again the switching method. We now use the notation $\mcal B(\, \cdot \,, \ell)$ for $ \mcal B_N(J,\tilde n, \hat n,\mbf L,\mbf M,\boldsymbol\ell)$ to make apparent the dependance in $\ell$. In order to compare $\mcal B(\, \cdot \,, \ell)$ with $\mcal B(\, \cdot \,, \ell-1)$ for $\ell$ bigger than a quantity $m_N:=d_N-\eta\sqrt{d_N}<k_N$,  we introduce a square whose vertices are $w'$ and new labeled vertices $s$, $s'$ and $t$.
Consider a graph $G$ in $\mcal B(\, \cdot \,, \ell)$. There are at least $\ell-6n$ possible choices of a vertex $s$,  which is a common neighbor of $w'$ and $w''$ and which is out of the set $\{v_i,w_i, v_i', w_i',w_i''\}_{i\in[n]} \sqcup \{v_i''\}_{i<j}$. Hence we deduce that
	\eq
		 \sharp \mcal B_N( \, \cdot \, , \ell) \leq \frac 1 {\ell - 6n}  \sharp \mcal B_N( \, \cdot \, , \ell)\cup( s \, | \, s\sim w' \trm{ and } s \sim w'')
	\qe
where the symbol $\mcal B_N( \, \cdot \,, \ell)\cup(s \, | \, s\sim w' \trm{ and } s \sim w'')$ represents the fact that $s$ has been chosen according to the above constraints. This notation will be used hereafter while we introduce other vertices. \\
We then choose a second vertex $s'$, among those which are not connected to $w'$ neither $w''.$ Let us count the number of possible choices.
There exist $N-d_N$ vertices which are not neighbor of $w'$. Moreover, $w''$ has $d_N-\ell$ neighbors which are not neighbors of $w'$. So the number of vertices which are neither a neighbor of $w'$ or $w''$ is at least $N-2d_N+\ell$. But $\ell$ is larger than $m_N$. Then this number is at least $N-d_N - \eta\sqrt{d_N}$. We then choose $s'$ which is neither a neighbor of $w'$ or $w''$, different from $s$ and  out of the set $\{v_i,w_i, v_i', w_i',w_i''\}_{i\in[n]} \sqcup \{v_i''\}_{i<j}$. At last, we chose a neighbor $t$ of $s'$, again out of the other vertices previously constructed. We then get
	\eqa\label{Eq:BigB}
		 \sharp \mcal B_N( \, \cdot \,, \ell ) = \frac { \sharp \mcal B_N( \, \cdot \, , \ell)\cup( s \, | \, s\sim w', s \sim w'') \cup ( s'   \, | \,  s' \not\sim w', s' \not\sim w'') \cup ( t  \, | \, , t\sim s')}{(\ell - c) (N-d_N - \eta\sqrt{d_N}-c)(d_n-c)} 
	\qea
where $c$ depends only on $n$. See Figure \ref{Schema2.pdf} for a scheme of the set in the right hand side above.

  \begin{figure}[h!]
    \begin{center}
     \includegraphics[width=110mm]{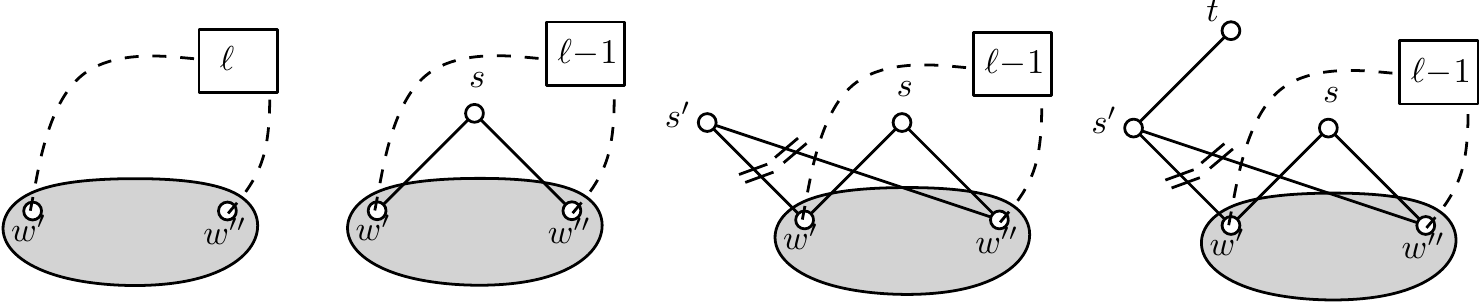}
    \end{center}
    \caption{Notations for Formula \eqref{Eq:CutGraphs}. Left: a scheme of the set in the right hand side of \eqref{Eq:BigB}. Right: the two different possibilities for the bond $t \sim s$.}
    \label{Schema2.pdf}
  \end{figure}

From the dichotomy that either $t$ and $s$ are or not connected by an edge, 
we obtain that 
	\eqa\label{Eq:CutGraphs}
		\lefteqn{  { \sharp \mcal B_N( \, \cdot \, , \ell)\cup( s \, | \, s\sim w', s \sim w'') \cup ( s'   \, | \,  s' \not\sim w', s' \not\sim w'') \cup ( t  \, | \, , t\sim s')}}\\
		& = &  { \sharp \mcal B_N( \, \cdot \, , \ell)\cup( s \, | \, s\sim w', s \sim w'') \cup ( s'   \, | \,  s' \not\sim w', s' \not\sim w'') \cup ( t  \, | \, , t\sim s',  t\not \sim s)}\nonumber\\
		& & + \  { \sharp \mcal B_N( \, \cdot \, , \ell)\cup( s \, | \, s\sim w', s \sim w'') \cup ( s'   \, | \,  s' \not\sim w', s' \not\sim w'') \cup ( t  \, | \, , t\sim s',  t  \sim s)}\nonumber
	\qea

By the switching method, for the first quantity of the right hand side in \ref{Eq:CutGraphs}, we can exchange the symbols $\sim $ and $\not \sim$ for the square formed by the sequence of vertices $(w',s',t,s)$ as long as we decrease $\ell$ by one:
	\eq
		\lefteqn{  \sharp \mcal B_N( \, \cdot \, , \ell)\cup( s \, | \, s\sim w', s \sim w'') \cup ( s'   \, | \,  s' \not\sim w', s' \not\sim w'') \cup ( t  \, | \, , t\sim s',  t\not \sim s)}\\
		& = & \sharp \mcal B_N( \, \cdot \, , \ell-1)\cup( s \, | \, s\not \sim w', s \sim w'') \cup ( s'   \, | \,  s'  \sim w', s' \not\sim w'') \cup ( t  \, | \, , t\not \sim s',  t \sim s).
	\qe
It is important to note that we do change the value of $\ell$ exactly by one while switching, since we assumed that $s'$ is not a neighbor of $w''$.
 
 The quantity in the right hand side above is obviously bounded by the same quantity where we ignore the condition that $s' \not \sim t$. Then the number of choices for $t$ is at most $d_N$, the number of choices of $s'$ is at most $d_N - \ell$ ($s'$ is now neighbor of $w'$ but not of $w''$). Counting the number of possible choices for $s$, which is a neighbor of $w''$ but not of $w'$ yields
	\eqa
			\lefteqn{   \sharp \mcal B_N( \, \cdot \, , \ell-1)\cup( s \, | \, s\not \sim w', s \sim w'') \cup ( s'   \, | \,  s'  \sim w', s' \not\sim w'') \cup ( t  \, | \, , t\not \sim s',  t \sim s)}  \nonumber\\
		& \leq &   \sharp \mcal B_N( \, \cdot \, , \ell-1)\cup( s \, | \, s\not \sim w', s \sim w'') \cup ( s'   \, | \,  s'  \sim w', s' \not\sim w'') \cup ( t  \, | \, ,  t \sim s) \nonumber\\
		& \leq  & d_N(d_N-\ell-1) \ \sharp \mcal B_N( \, \cdot \, , \ell-1)\cup( s \trm{ s.t. } s\not \sim w', s  \sim w'') \nonumber\\
		& \leq & d_N (d_N-\ell-1)^2 \ \sharp \mcal B_N( \, \cdot \,, \ell-1),\label{Eq:CutGraphs2}
	\qea
This computation is summed up in Figure \ref{Fig:3}. 

  \begin{figure}[h!]
    \begin{center}
     \includegraphics[width=130mm]{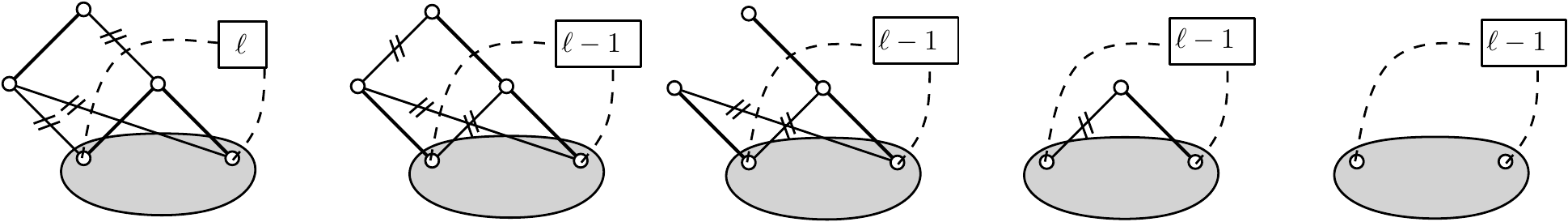}
    \end{center}
    \caption{A scheme of the proof of Estimate \eqref{Eq:CutGraphs2}.}
    \label{Fig:3}
  \end{figure}
\par Let now estimate the second quantity in the right hand side of \eqref{Eq:CutGraphs}. Using the trivial bound where we forget that $w' \not \sim s'$ and counting the choices for $s'$, $t'$ and then $s$ yields
	\eqa
		  \lefteqn{\sharp \mcal B_N( \, \cdot \, , \ell)\cup( s \, | \, s\sim w', s \sim w'') \cup ( s'   \, | \,  s' \not\sim w', s' \not\sim w'') \cup ( t  \, | \, , t\sim s',  t  \sim s)}\nonumber\\
		  & \leq & 
		   \sharp \mcal B_N( \, \cdot \, , \ell)\cup( s \, | \, s\sim w', s \sim w'') \cup ( t  \, | \, ,   t  \sim s)\cup ( s'   \, | \,  s'\sim t) \nonumber\\
		   & \leq &
		    d_N \times \sharp \mcal  B_N( \, \cdot \, , \ell)\cup( s \, | \, s\sim w', s \sim w'')  \cup ( t  \, | \, , t\sim s',  t  \sim s)\nonumber\\
		   & \leq & d_N^2 \ell \ \sharp \mcal B(\, \dot \,, \ell).\label{Eq:CutGraphs3}
	\qea

This computation is summed up in Figure \ref{Fig:4}. 

  \begin{figure}[h!]
    \begin{center}
     \includegraphics[width=130mm]{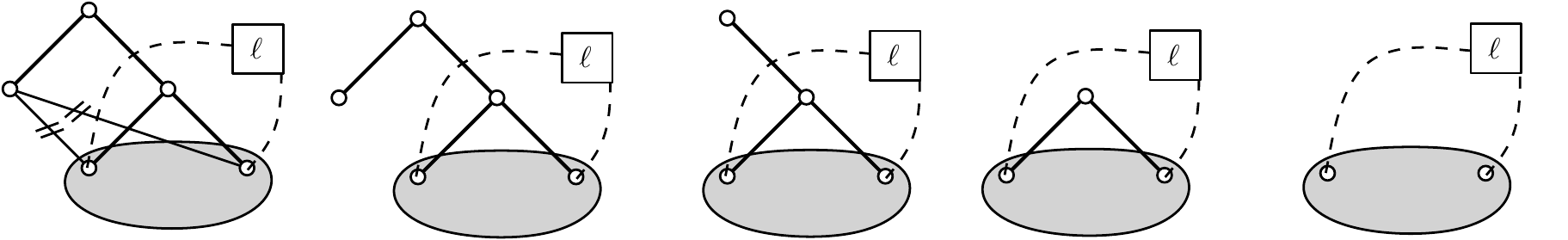}
    \end{center}
    \caption{A scheme of the proof of Estimate \eqref{Eq:CutGraphs3}.}
    \label{Fig:4}
  \end{figure}

To sum up, by \eqref{Eq:CutGraphs}, \eqref{Eq:CutGraphs2} and \eqref{Eq:CutGraphs3}, there exists a constant $c$ such that 
	$$ (N-d_N-\eta\sqrt{d_N}-c)(d_N-c) \ \sharp \mcal B_N(\, \cdot \, , \ell) \leq \frac{d_N(d_N-\ell)^2 \ \sharp \mcal B_N(\, \cdot \, , \ell-1) + d_N^2 \ell \ \sharp \mcal B_N(\, \cdot \, , \ell)}{\ell-c},$$
	or equivalently
	$$  \sharp \mcal B_N(\, \cdot \, , \ell)  \leq O \Big( \frac 1 {N-2d_N-\eta\sqrt{d_N}} \Big) \frac{(d_N-\ell)^2}\ell  \sharp \mcal B_N(\, \cdot \, , \ell-1).$$ 
But $\ell\geq d_N-\eta\sqrt d_n$ yields $ \frac{(d_N-\ell)^2}\ell  \leq \frac{\eta^2d_N}{d_N-B\sqrt d_N}\leq \eta^2$. Hence, thanks to the technical condition $N-2d_N-\eta\sqrt{d_N} \limN \infty$, for $N$ large enough there exists $\rho<1$ such that 
	$$ \sharp \mcal B_N(\, \cdot \, , \ell) \leq \rho  \ \sharp \mcal B_N(\, \cdot \, , \ell-1) \leq \rho^{\ell - m_N}\sharp \mcal B(\, \cdot , m_N)$$
where we recall that $m_N = d_N - \eta\sqrt d_N$.
With the same reasoning as in step 3, we bound $B(\, \cdot , m_N)$ from above by successively: forgetting the condition $f_j'' \not\subset G_N$, counting the number of choices for $v''$, forgetting the conditions given by $L,M$ and $m_N$, and counting the number of choices for $w'$, $v'$ and $w''$. This yields 
	$$\sharp \mcal B(\, \cdot , m_N) \leq (d_N-m_N) d_N(N-d_N)^2 \sharp \mcal B(\, \cdot \, )\setminus \{\tilde n\},$$
where we remind that $\mcal B(\, \cdot \, )\setminus \{\tilde n\}$ means we have forget all constructions for the index $j = \tilde n$ in the construction of $\mcal B(\, \cdot \, )$.

Hence, we obtain
	$$\sum_{\ell = k_N}^{d_N - \max(M,L)} \frac{ \sharp \mcal B_N( \, \cdot \, )}{(d_N-\ell)^2 d_N (N-d_N)(N-1)}  \leq \rho^{k_N-m_N} \sharp \mcal B_N( \, \cdot \, ) \setminus \{\tilde n\} = \rho^{(\eta-\eta')\sqrt{d_N}} \sharp \mcal B_N( \, \cdot \, ) \setminus \{\tilde n\}.$$
This is the announced estimate (\ref{lgrand}).\\
 The conclusion of Steps 4 and 5 is that Lemma \ref{Lem:Graph1} is true. 
\\
\\{\it Step 6: Indices $\hat n < j \leq \tilde n$.} By Step 4, it is clear that 
	$$\sharp \mcal C_N(\tilde n, \hat n,\mbf L,\mbf M) \leq \sqrt{d_N}^{-\frac{ (\tilde n - \hat n)}2} \sharp \mcal D_N(\tilde n, \hat n,\mbf L,\mbf M),$$
where $\mcal D_N(\tilde n, \hat n,\mbf L,\mbf M)$ is the set of labelled graphs $(G,w_j', v_j', v_j'', \tilde n < j \leq n)$ such that all the constructions in $\mcal C_N(\tilde n, \hat n,\mbf L,\mbf M)$ for $\hat n<j\leq \tilde n$ are forgotten. 
\\
\\{\it Step 7: Indices $\hat n < j \leq \tilde n$.} For each $j=\tilde n+1\etc n$, there exists (at least) a vertex $x_j$ among $v_j'$, $w_j'$ and $w_j''$ which coincides with (at least) another vertex in $\{v_i, w_i\}_{i\in n} \cup \{v_i', w_i', w_i''\}_{i>\tilde n}$. Assume that $j \in J$. If we forget the constraint A3, they would be $d_N$ choices for $w_j'$ and $N-1-d_N$ choices for $v_j'$ and $w_j'$. If instead $j \not\in J$, these choices are respectively $N-1-d_N$ and $d_N$ choices. \\
As a consequence, if $x_j$ is chosen in the fixed vertices $v_i,w_i, i=1\etc n$, it is clear that we gain at least $d_N^{-1}$ from that term.
Assume now that $x_j$ arises as a labeled vertex associated to $m_j-1$ other vertices among $v_i', w_i', w_i'', i>\tilde n$. In other word, $m_j$ is the multiplicity of $x_j$ as a labelled vertex. Without the constraint A3, the number of choices for $x_j$ and these $m_j-1$ other vertices is of the form $d_N^{p_j}\times (N-d_N)^{q_j}$ for $p_j+q_j=m_j$. Instead, it is less than $O(d_N)$ if $q_j\geq 1$ and less than $O(N)$ otherwise. We gain for each copy of the copies of $x_j$ at least the $m_j$-th root of its contribution $d_N$ or $N-d_N$. Hence we gain at least the expected term $\sqrt{d_N}$ for each index $j$.


\begin{thebibliography}{10}
\bibitem{AGZ}
G. Anderson, A. Guionnet, O. Zeitouni
\newblock An introduction to random matrices.
\newblock Cambridge Studies in Advanced Mathematics, 118. Cambridge University Press, Cambridge, 2010. xiv+492 pp. ISBN: 978-0-521-19452-5 
\bibitem{BAD}
G. Ben Arous, K. Dang
\newblock On fluctuations of eigenvalues of random permutation matrices
\newblock arXiv:1106.2108 (2011) , To appear in \emph{Ann. Inst. Henri Poincar\'e}.
\bibitem{just}
C. Bordenave, M. Lelarge, J. Salez
\newblock The rank of diluted random graphs
\newblock \emph{Annals of Probability}, Vol. 39 (2011), No. 3, 1097-1121 
\bibitem{Dumitriu2}
 I. Dumitriu,  T. Johnson, S. Pal, E. Paquette, \newblock Functional limit theorems for random regular graphs.
 \newblock \emph{Probab. Theory Related Fields}, Vol. 156 (2013), no. 3-4, 921–975.
 \bibitem{DumiPal} 
  I. Dumitriu, S. Pal, 
  \newblock Sparse regular random graphs: spectral density and eigenvectors. 
  \newblock \emph{Annals of Probability}, 40 (2012), no. 5, 2197–2235.
  
  \bibitem{DYK}
 K. Dykema
  \newblock On certain free product factors via an extended matrix model.
  \newblock \emph{J. Funct. Anal.} 112 (1993), no. 1, 31-60
  
   \bibitem{GUI}
  A. Guionnet
  \newblock Large random matrices: lectures on macroscopic asymptotics.
  \newblock Lecture Notes in Mathematics. Lectures from the 36th Probability Summer School held in
              Saint-Flour, 2006. Springer-Verlag, Berlin (2009), xii+294.
 
  \bibitem{Khor}
 A.  Khorunzhy, \newblock Sparse random matrices: spectral edge and statistics of rooted trees. 
 \newblock \emph{Adv. in Appl. Probab.} 33 (2001), no. 1, 124–140.
 
  \bibitem{Lovasz}
 L. Lov\'asz
  \newblock Very large graphs.
  \newblock Current developments in mathematics 2008 (2009), 67-128.
 
 \bibitem{MAL12} 
 C. Male, 
 \newblock The distribution of traffics and their free product.
 \newblock arXiv:1111.4662v4, preprint. 
 
 \bibitem{MAL122} 
 C. Male, 
 \newblock The limiting distributions of large heavy Wigner and arbitrary random matrices.
 \newblock arXiv:1209.2366, preprint
 \bibitem{McKay}
B. McKay, 
 \newblock The expected eigenvalue distribution of a large regular graph.
 \newblock \emph{Linear Algebra Appl.} 40 (1981), 203–216.
 
 \bibitem{MoWo}
 B. Mohar, W. Woess,
 \newblock A Survey on Spectra of Infinite Graphs.
  \newblock \emph{Bull. London Math. Soc.} 21 (1989), no. 3, 209-234.
 
 
 \bibitem{TranVuWang}
L. V. Tran, V. H. Vu, K. Wang,
\newblock Sparse Random Graphs: Eigenvalues and Eigenvectors
\newblock  \emph{Random Structures and Algorithms},
Vol. 42 (2013), no. 1, p 110–134 

\bibitem{VOIC}
D. Voiculescu,
\newblock Operations on certain non-commutative operator-valued random
              variables.
\newblock \emph{Ast\'erisque} 232 (1995), 243-275.

\end{thebibliography}
\end{document}